\newcommand{\DateOfPub}[1]{}
\newtheorem{theorem}{Theorem}[section]
\newtheorem{proposition}[theorem]{Proposition}
\newtheorem{lemma}[theorem]{Lemma}
\newtheorem{corollary}[theorem]{Corollary}
\theoremstyle{remark}
\newtheorem{remark}{Remark}
\theoremstyle{definition}
\newtheorem{assumption}{Assumption}
\newcommand\EatDot[1]{}
\newcommand{\bX}{\boldsymbol{X}}
\newcommand{\bb}{\boldsymbol{b}}
\newcommand{\br}{\boldsymbol{r}}
\newcommand{\bu}{\boldsymbol{u}}
\newcommand{\bv}{\boldsymbol{v}}
\newcommand{\bw}{\boldsymbol{w}}
\newcommand{\bx}{\boldsymbol{x}}
\newcommand{\by}{\boldsymbol{y}}
\newcommand{\bz}{\boldsymbol{z}}
\newcommand{\bxi}{\boldsymbol{\xi}}
\newcommand{\bomega}{\boldsymbol{\bomega}}
\newcommand{\bfA}{\mathbf{A}}
\newcommand{\bfB}{\mathbf{B}}
\newcommand{\bfC}{\mathbf{C}}
\newcommand{\bfD}{\mathbf{D}}
\newcommand{\bfG}{\mathbf{G}}
\newcommand{\bfI}{\mathbf{I}}
\newcommand{\bfK}{\mathbf{K}}
\newcommand{\bfM}{\mathbf{M}}
\newcommand{\bfO}{\mathbf{O}}
\newcommand{\bfP}{\mathbf{P}}
\newcommand{\bfU}{\mathbf{U}}
\newcommand{\bfV}{\mathbf{V}}
\newcommand{\bSigma}{\mathbf{\Sigma}}
\newcommand{\bbD}{\mathbb{D}}
\newcommand{\bbH}{\mathbb{H}}
\newcommand{\bbP}{\mathbb{P}}
\newcommand{\bbS}{\mathbb{S}}
\newcommand{\calC}{\mathcal{C}}
\newcommand{\calD}{\mathcal{D}}
\newcommand{\calL}{\mathcal{L}}
\newcommand{\calN}{\mathcal{N}}
\newcommand{\calO}{\mathcal{O}}
\newcommand{\calS}{\mathcal{S}}
\newcommand{\C}{\mathbb{C}}
\newcommand{\N}{\mathbb{N}}
\newcommand{\R}{\mathbb{R}}
\newcommand{\E}{\mathbb{E}}
\renewcommand{\Pr}{\mathbb{P}}
\newcommand{\tr}{\mathrm{tr}}
\newcommand{\diag}{\mathrm{diag}}
\newcommand{\diff}{\mathrm{d}}
\newcommand{\zero}{\mathbf{0}}
\DeclareMathOperator*{\argmin}{arg\,min}
\DeclareMathOperator*{\minimize}{minimize}
\newcommand{\Loss}{\mathcal{L}}
\newcommand{\Reg}{\textnormal{Reg}}
\newcommand{\HS}{\mathbb{H}}
\renewcommand{\Re}{\mathfrak{Re}}
\newcommand{\Lasso}{\textnormal{L}}
\newcommand{\Slope}{\textnormal{S}}
\newcommand{\Frobenius}{\textnormal{F}}
\newcommand{\op}{\textnormal{op}}
\newcommand{\rad}{\textnormal{rad}}
\newcommand{\vect}{\textnormal{vec}}
\newcommand{\ind}{\mathbbm{1}}
\title[Sparse OU process with i.i.d.~paths]{Sparse estimation for the drift of high-dimensional Ornstein--Uhlenbeck processes with i.i.d.~paths}
\author[S Nakakita]{Shogo Nakakita}
\address{Komaba Institute for Science, University of Tokyo, 3-8-1 Komaba, Meguro-ku, Tokyo 153-8902, Japan}
\date{}
\begin{document}
\begin{abstract}
    We study sparsity-regularized maximum likelihood estimation for the drift parameter of high-dimensional non-stationary Ornstein--Uhlenbeck processes given repeated measurements of i.i.d.~paths.
    In particular, we show that Lasso and Slope estimators can achieve the minimax optimal rate of convergence.
    We exhibit numerical experiments for sparse estimation methods and show their performance.
\end{abstract}

\maketitle

\section{Introduction}

Recent increases in available data motivate us to employ more information in analysing dynamical phenomena with high-dimensional statistical models.
To meet this expectation, statistical theories and methods for continuous-time high-dimensional dynamical systems have been developed in this decade.
Although theories and methods of high-dimensional statistical models have gathered great interest in the community for a long time \citep{candes2007dantzig,van2008high,bartlett2020benign}, estimation of high-dimensional dynamical systems is a quite recent topic.
Especially, estimation of high-dimensional discrete-time stochastic dynamical systems has been explosively investigated in this decade \citep{basu2015regularized,wong2020lasso,nakakita2022benign}.
Moreover, statistics for high-dimensional continuous-time stochastic dynamical models is a further emerging topic developed in these years \citep{fujimori2019dantzig,gaiffas2019sparse,ciolek2020dantzig,ciolek2025lasso,dexheimer2024lasso,marushkevych2025consistent}.
They usually assume the ergodicity of the solutions of stochastic differential equations (SDEs) and long-term observations, and studies under setups without ergodicity are very scarce, though such situations frequently appear in real data analysis.

This study analyses drift estimation of a high-dimensional Ornstein--Uhlenbeck (OU) process, which is a fundamental statistical model for continuous-time stochastic dynamics, with repeated measurements of independent and identically distributed (i.i.d.) paths instead of ergodicity and long-term observations.
The development of estimation methods for non-ergodic OU processes enables us to consider statistical modelling of non-ergodic high-dimensional stochastic dynamics, which often appear in real data analysis (e.g., high-dimensional longitudinal observations in medical data).
In particular, we investigate estimation of OU processes with sparse drift coefficients by observing i.i.d.~paths.
While studies under the i.i.d.~observation scheme have focused on low-dimensional parametric or nonparametric inference \citep{delattre2013maximum,comte2020nonparametric,marie2023nadaraya}, we show that high-dimensional inference can be successfully discussed under this framework.

Our main contribution is to show that the rates of convergence of Lasso and Slope estimators, which are typical sparse estimation methods recently, are minimax optimal under the repeated measurement regime.
In particular, the rates of convergence of them under our regime are the same as those in previous studies \citep[e.g.,][]{dexheimer2024lasso} except for the sample size $N$ in our setup and the terminal $T$ in the previous works.
Therefore, our result affirmatively answers the question ``Does Lasso (or Slope) for OU processes in repeated measurements have favourable theoretical guarantees?'', which should naturally arise given the recent development of high-dimensional estimation for ergodic SDEs with long-term observations.
Moreover, the result also indicates the possibility that other results derived under ergodicity (for example, sparse estimation without linearity as \citealp{ciolek2025lasso}) may hold even under the repeated measurement scheme.

In addition, as a technical contribution, we develop a novel concentration bound for a sample covariance matrix with the presence of non-centred structures.
While some previous studies have used the centred property of stochastic processes to evaluate the concentration of sample covariance matrices, we need to extend their argument due to the non-centred evolution of OU processes.
Specifically, we decompose a sample covariance matrix into its second-order Wiener chaos part, which is common in the analysis of sparse estimation for centred processes, its first-order Wiener chaos part, which appears due to the non-centred behaviour, and the part of the sample covariance matrix of transformed initial values.
As a result, the derived concentration bounds of all of them are of the same order, and thus, we can derive the result in a similar manner to the previous studies.

\subsection{Literature reviews}
OU processes are fundamental continuous-time stochastic processes, and they are frequently used as statistical models of dynamical systems.
They have been initiated as an physical model \citep{uhlenbeck1930theory}, and applications have been greatly diversified: for example, finance \citep{vasicek1977equilibrium}, neuroscience \citep{ricciardi1979ornstein}, biology \citep{bachar2012stochastic}, and wind power estimation \citep{arenas2020ornstein}.

Non-asymptotic analysis of sparse estimation of SDEs with ergodicity has gathered interest.
\citet{fujimori2019dantzig} considers theoretical guarantees for the Dantzig selector of linearly parametrized diffusion processes with ergodicity.
\citet{gaiffas2019sparse} give non-asymptotic bounds for $\ell^{1}$-regularized maximum likelihood estimation of ergodic Ornstein--Uhlenbeck processes with row-sparsity.
\citet{ciolek2020dantzig} study the Lasso estimator and Dantzig selector for the model, similar to \citet{gaiffas2019sparse}, and improve bounds.
\citet{ciolek2025lasso} consider a generalized model setup and give non-asymptotic bounds without the linearity of models.
\citet{dexheimer2024lasso} investigate Lasso and Slope estimation for L\'{e}vy-driven Ornstein--Uhlenbeck processes.

Estimation of the drift coefficient of SDEs with independent and identically distributed (i.i.d.) paths has been studied in these two decades, as we can find this problem in the context of functional data analysis and longitudinal/panel data analysis.
We can classify the approaches of previous studies into parametric and nonparametric estimation.
In particular, the parametric estimation problem has been strongly tied to mixed-effect models.
\citet{ditlevsen2005mixed} study SDEs with mixed effects and estimation of them. 
\citet{picchini2011practical} consider computationally efficient estimation of high-dimensional SDEs with mixed effects.
\citet{delattre2013maximum} investigate maximum likelihood estimation of SDEs with random effects.
The nonparametric estimation is another important direction in estimation via i.i.d.~path observations.
\citet{comte2020nonparametric} investigate nonparametric estimation of drift coefficients with a projection estimator defined in $L^{2}$.
\citet{marie2023nadaraya} analyse Nadaraya--Watson estimation of drift coefficients.
\citet{comte2024nonparametric} consider nonparametric estimation of the drift of SDEs with space-time dependent drift and diffusion coefficients.
\citet{ella2024nonparametric} studies nonparametric estimation of the diffusion coefficient of SDEs with i.i.d.~path observations.

\subsection{Notation}
We introduce a list of the notation used in this study.

For any $d_{1}\times d_{2}$ matrix $\bfM$, $\bfM^{(i,j)}$ denotes the $(i,j)$-th element of $\bfM$, $\bfM^{\top}$ is the transpose of $\bfM$, and $\bfM^{\otimes2}:=\bfM\bfM^{\top}$.
For any $d\times d$ matrix $\bfV$, $\tr(\bfV)=\sum_{i=1}^{d}\bfV^{(i,i)}$, that is the trace of $\bfV$. 
For any pair of $d_{1}\times d_{2}$ matrices $\bfM_{1},\bfM_{2}$, $\langle \bfM_{1},\bfM_{2}\rangle =\tr(\bfM_{1}^{\top}\bfM_{2})$.
For any $d_{1}\times d_{2}$ matrix $\bfM_{1}$ and $d_{3}\times d_{4}$ matrix $\bfM_{2}$, $\bfM_{1}\otimes\bfM_{2}$ denotes the Kronecker product defined as
\begin{equation*}
    \bfM_{1}\otimes\bfM_{2}=\left[\begin{matrix}
        \bfM_{1}^{(1,1)}\bfM_{2} & \cdots & \bfM_{1}^{(1,d_{2})}\bfM_{2} \\
        \vdots & \ddots & \vdots\\
        \bfM_{1}^{(d_{1},1)}\bfM_{2} & \cdots & \bfM_{1}^{(d_{1},d_{2})}\bfM_{2}
    \end{matrix}\right]\in\R^{(d_{1}d_{3})\times(d_{2}d_{4})}.
\end{equation*}

For any $d_{1}\times d_{2}$ matrix $\bfM$, we define the $\ell^{p}$-norm ($p\in\{0\}\cup[1,\infty]$) as 
\begin{equation*}
    \|\bfM\|_{p}:=\begin{cases}
        \sum_{i=1}^{d_{1}}\sum_{j=1}^{d_{2}}\ind_{(0,\infty)}(|\bfM^{(i,j)}|)&\text{ if }p=0,\\
        \left(\sum_{i=1}^{d_{1}}\sum_{j=1}^{d_{2}}|\bfM^{(i,j)}|^{p}\right)^{1/p}&\text{ if }p\in[1,\infty),\\
        \max_{i,j}|\bfM^{(i,j)}|&\text{ if }p=\infty.
    \end{cases}
\end{equation*}
For the same $\bfM$, we define the operator norm of $\bfM$ as 
\begin{equation*}
    \|\bfM\|_{\op}:=\sup_{\bu\in\bbS^{d_{1}-1},\bv\in\bbS^{d_{2}-1}}\bu^{\top}\bfM\bv.
\end{equation*}

For a vector $\bv\in\R^{d}$, let $\bv_{1}^{\sharp},\ldots,\bv_{d}^{\sharp}$ denote a nonincreasing rearrangement of $|\bv_{1}|,\ldots,|\bv_{d}|$, and we define $\|\bv\|_{\ast}$ as
\begin{equation*}
    \|\bv\|_{\ast}=\sum_{i=1}^{d}\lambda_{i}\bv_{i}^{\sharp},
\end{equation*}
where $\{\lambda_{i}\colon i=1,\ldots,d\}$ is a sequence of non-increasing positive numbers.
Throughout this paper, we let $\lambda_{i}=\sqrt{\log(2d/i)}$ as \citet{dexheimer2024lasso}.
For $d_{1}\times d_{2}$ matrix $\bfM$, we define $\|\bfM\|_{\ast}=\|\textnormal{vec}(\bfM)\|_{\ast}$.

For any pair $\bx,\by\in L^{2}:=L^{2}([0,T],\diff t;\R^{d})$, we define the $L^{2}$ inner product
\begin{equation*}
    \langle \bx,\by\rangle_{L^{2}}:=\int_{0}^{T}\bx(t)^{\top}\by(t)\diff t
\end{equation*}
and $\|\cdot\|_{L^{2}}$ is the $L^{2}$-norm induced by this inner product.

For a real-valued random variable $X$, $\|X\|_{\psi_{2}}$, the subgaussian norm of $X$, is defined as 
\begin{equation*}
    \|X\|_{\psi_{2}}:=\inf\left\{t>0\colon \E\left[\exp\left(X^{2}/t^{2}\right)\right]\le 2\right\}.
\end{equation*}

\subsection{Paper organization}
Section \ref{sec:problem} explains the problem setup of this paper in detail.
Section \ref{sec:theoretical} is devoted to the presentation of the theoretical analysis on Lasso and Slope estimation for high-dimensional OU processes with repeated measurement.
Section \ref{sec:numerical} shows the results of a numerical experiment and confirms that practical behaviours of the sparse estimation coincide with the theoretical analysis.
Section \ref{sec:conclusion} is for concluding remarks and future studies.
All technical proofs are deferred to the Appendix.
\section{Problem setup}\label{sec:problem}

We first define the following $d$-dimensional stochastic differential equations (SDEs):
\begin{equation*}
    \diff\bx_{i}(t)=\bfA\bx_{i}(t)\diff t+\diff \bw_{i}(t),\ \bx_{i}(0)=\bxi_{i},\ t\in[0,T],\ i\in\{1,...,N\},
\end{equation*}
where $\bfA\in\R^{d\times d}$ is the unknown drift parameter, 
$\bw=\{\bw_{i}(t)\colon t\in[0,T],i\in\{1,...,N\}\}$ is a sequence of independent $d$-dimensional standard Wiener processes, $\{\bxi_{i}\}_{i=1}^{N}$ is a sequence of $d$-dimensional random vectors independent of the Wiener processes, and $T>0$ is the terminal.
Our problem is estimation of the $d^{2}$-dimensional parameter $\bfA$ given the continuous observations of the $N$ solutions $\{\bx_{i}(t)\colon t\in[0,T],i\in\{1,...,N\}\}$.
We let $\bfA_{0}$ denote the true value of $\bfA$.

We define an empirical risk function as follows:
\begin{equation}\label{eq:risk}
    \Loss_{N}(\bfA):=\frac{1}{N}\sum_{i=1}^{N}\left(-\int_{0}^{T}\left(\bfA \bx_{i}(t)\right)^{\top}\diff\bx_{i}(t)+\frac{1}{2}\int_{0}^{T}\left\|\bfA\bx_{i}(t)\right\|_{2}^{2}\diff t\right),\ \bfA\in\R^{d\times d}.
\end{equation}
This is the negative log-likelihood function scaled by $1/N$.
If dimension $d$ is fixed, then the asymptotic efficiency of its minimizer (the maximum likelihood estimator, MLE) is a well-known result from classical statistical theory and studies in these decades.
However, since our interest is estimation under large $d$, the validity of MLE can be violated.
Therefore, instead of the minimization of the empirical risk, we consider the following regularized empirical risk minimization problem:
\begin{equation}\label{eq:erm}
    \minimize_{\bfA\in\R^{d\times d}}\left\{\Loss_{N}(\bfA)+\lambda\Reg(\bfA)\right\},
\end{equation}
where $\Reg:\R^{d\times d}\to[0,\infty)$ is the regularization function and $\lambda$ is the magnitude of regularization.
Successful selection of the regularizer $\Reg(\cdot)$ enables us to introduce an inductive bias in high-dimensional estimation problems and makes the problems tractable in a similar manner to low-dimensional problems.
The Lasso regularization \citep{tibshirani1996regression} and Slope regularization \citep{bogdan2015slope} are typical to introduce sparsity as an inductive bias.
When choosing $\Reg(\cdot)=\|\cdot\|_{1}$, we derive the Lasso estimator $\hat{\bfA}_{\Lasso}$:
\begin{equation}\label{eq:lasso:definition}
    \hat{\bfA}_{\Lasso}\in\argmin_{\bfA\in\R^{d\times d}}\left\{\Loss_{N}(\bfA)+\lambda_{\Lasso}\|\bfA\|_{1}\right\}.
\end{equation}
Similarly, by taking $\Reg(\cdot)=\|\cdot\|_{\ast}$, the Slope estimator $\hat{\bfA}_{\Slope}$ is defined as
\begin{equation}\label{eq:slope:definition}
    \hat{\bfA}_{\Slope}\in\argmin_{\bfA\in\R^{d\times d}}\left\{\Loss_{N}(\bfA)+\lambda_{\Slope}\|\bfA\|_{\ast}\right\}.
\end{equation}
In the following sections, we investigate the theoretical and practical behaviours of these regularized estimators under $N$, which can be much smaller than the dimension of unknown parameters $d^{2}$.

Let us remark on the asymptotic regime where our non-asymptotic bounds given below get non-vacuous.
Our discussion is based on an implicit assumption that both $s\ll N$ ($s=\|\bfA_{0}\|_{0}$) and $d\ll N$ hold.
Note that the parameter dimension is $d^{2}$ (not $d$), and usually $d=\calO( s)$ (in many situations, each coordinate should depend on itself, and thus most of the diagonal elements of $\bfA_{0}$ should be non-zero).
A more detailed discussion is given in Section \ref{sec:theoretical}.

\section{Theoretical results}\label{sec:theoretical}
In this section, we exhibit upper bounds on errors of regularized MLEs under mild conditions.
We also see that lower bounds on minimax risks, and upper and lower bounds match up to constants; therefore, Lasso estimation is a minimax rate-optimal estimation in our i.i.d.~observation framework as well as long-term observations shown in \citet{ciolek2020dantzig}.

\subsection{Assumptions}\label{sec:theoretical:assumptions}
For the sake of simplicity, we impose an assumption on the true value of the drift coefficient $\bfA_{0}$.
\begin{assumption}[diagonalizability condition]\label{assm:eigen}
The matrix $\bfA_{0}$ is diagonalizable as follows:
\begin{equation*}
    \bfA_{0}=\bfP_{0}\diag(\theta_{1},\ldots,\theta_{d})\bfP_{0}^{-1},
\end{equation*}
where $\{\theta_{i}\}\subset\C$ is the eigenvalues of $\bfA_{0}$, and the column vectors of $\bfP_{0}$ are the eigenvectors of $\bfA_{0}$.
\end{assumption}

Under Assumption \ref{assm:eigen}, we use the following notation:
\begin{equation*}
    \mathfrak{a}_{0}:=\max_{i=1,\ldots,d}|\Re(\theta_{i})|,\ \mathfrak{p}_{0}:=\left\|\bfP_{0}\right\|_{\op}\left\|\bfP_{0}^{-1}\right\|_{\op},
\end{equation*}
where $\Re(\cdot)$ denotes the real part of its complex argument.

We impose an assumption on the initial values as well.
\begin{assumption}\label{assm:initial}
    $\{\bxi_{i}\colon i\in\{1,\ldots,N\}\}$ is a sequence of independent and identically distributed subgaussian random vectors; that is, for some $K\ge 1$, for any $x\in\R^{d}$, $\|\langle\bxi_{i},x\rangle\|_{\psi_{2}}\le K\|\langle\bxi_{i},x\rangle\|_{L^{2}}$.
\end{assumption}
Let $\bSigma=\E[\bxi_{i}\bxi_{i}^{\top}]$.
This assumption is used for the derivation of high-probability bounds for the operator norm of a sample second moment matrix $\|(1/N)\sum_{i=1}^{N}\bxi_{i}\bxi_{i}^{\top}\|_{\op}$.
As long as we can obtain an upper bound of $\calO(1)$, we can replace this assumption with different ones.

We define the following second moment matrix:
\begin{equation}
    \bfC_{\infty}:=\int_{0}^{T}\left(\exp(t\bfA_{0})\bSigma\exp(t\bfA_{0})^{\top}+\int_{0}^{t}\exp((t-t')\bfA_{0})^{\otimes2}\diff t'\right)\diff t=\E\left[\frac{1}{N}\sum_{i=1}^{N}\int_{0}^{T}\bx_{i}(t)\bx_{i}(t)^{\top}\diff t\right].
\end{equation}
We also let $\kappa_{\max}$ and $\kappa_{\min}$ as the largest eigenvalue and smallest one of $\bfC_{\infty}$.
Note the bounds $\kappa_{\max}\le  \mathfrak{p}_{0}^{2}(T+\|\bSigma\|_{\op})T\exp\left(2\mathfrak{a}_{0}T\right)$ and $\kappa_{\min}\ge (1/2)\mathfrak{p}_{0}^{-2}T^{2}\exp\left(-2\mathfrak{a}_{0}T\right)$.

\subsection{Upper bounds for errors}

The next proposition derives an upper bound on the error of the Lasso estimator.
\begin{proposition}\label{prop:upper:lasso}
    Suppose that Assumption \ref{assm:eigen} holds, and let $s=\|\bfA_{0}\|_{0}$.
    There exists a universal constant $c_{\Lasso}>0$ such that if $\lambda_{\Lasso}$ satisfies
    \begin{equation*}
        \lambda_{\Lasso}\ge 2c_{\Lasso}\sqrt{\frac{\kappa^{\ast}\log(2ed^{2}/s)}{N}},
    \end{equation*}
    then, for some $c\ge 1$ dependent only on $T$, $\mathfrak{a}_{0}$, $\mathfrak{p}_{0}$, $K$ and $\|\bSigma\|_{\op}$, for any $\epsilon_{0}\in(0,1)$, $N\in\N$, and $\bfA\in\R^{d\times d}$ with $\|\bfA\|_{0}\le s$, with probability $1-\epsilon_{0}/2-8\times9^{d}\times e^{-N/(c(1+d/N))}$,
    \begin{equation*}
        \frac{1}{N}\sum_{i=1}^{N}\left\|\left(\hat{\bfA}_{\Lasso}-\bfA_{0}\right)\bx_{i}\right\|_{L^{2}}^{2}+\lambda_{\Lasso}\left\|\bfA-\hat{\bfA}_{\Lasso}\right\|_{1}\le \frac{1}{N}\sum_{i=1}^{N}\left\|\left(\bfA-\bfA_{0}\right)\bx_{i}\right\|_{L^{2}}^{2}+\frac{8\lambda_{\Lasso}^{2}}{\kappa_{\min}}\left(s\vee \frac{\log(4\epsilon_{0}^{-1})}{\log(2ed^{2}/s)}\right).
    \end{equation*}
\end{proposition}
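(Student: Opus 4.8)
The plan is to run the standard $\ell^{1}$-penalised $M$-estimation argument, pushing the Ornstein--Uhlenbeck-specific content into two high-probability events. Throughout write $\hat{\bfH}:=\hat{\bfA}_{\Lasso}-\bfA_{0}$, $\bfD:=\hat{\bfA}_{\Lasso}-\bfA$, and $S:=\supp(\bfA)$ so that $|S|\le s$. First I would exploit the quadratic structure of $\Loss_{N}$: inserting $\diff\bx_{i}=\bfA_{0}\bx_{i}\diff t+\diff\bw_{i}$ into \eqref{eq:risk} and completing the square, the first-order terms cancel and one finds, for every $\bfA$,
\begin{equation*}
\Loss_{N}(\bfA)-\Loss_{N}(\bfA_{0})=\frac{1}{2N}\sum_{i=1}^{N}\left\|(\bfA-\bfA_{0})\bx_{i}\right\|_{L^{2}}^{2}-\langle\bfA-\bfA_{0},\bfG\rangle,\qquad\bfG:=\frac{1}{N}\sum_{i=1}^{N}\int_{0}^{T}\diff\bw_{i}(t)\,\bx_{i}(t)^{\top}.
\end{equation*}
Substituting this into the optimality inequality $\Loss_{N}(\hat{\bfA}_{\Lasso})+\lambda_{\Lasso}\|\hat{\bfA}_{\Lasso}\|_{1}\le\Loss_{N}(\bfA)+\lambda_{\Lasso}\|\bfA\|_{1}$, rearranging so that the stochastic parts recombine into $\bfD$, and multiplying by two produces the basic inequality
\begin{equation*}
\frac{1}{N}\sum_{i=1}^{N}\|\hat{\bfH}\bx_{i}\|_{L^{2}}^{2}\le\frac{1}{N}\sum_{i=1}^{N}\|(\bfA-\bfA_{0})\bx_{i}\|_{L^{2}}^{2}+2\langle\bfD,\bfG\rangle+2\lambda_{\Lasso}\bigl(\|\bfA\|_{1}-\|\hat{\bfA}_{\Lasso}\|_{1}\bigr).
\end{equation*}

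The heart of the argument is to dominate the score through $\langle\bfD,\bfG\rangle\le\|\bfG\|_{\infty}\|\bfD\|_{1}$, so I would choose $\lambda_{\Lasso}$ so that $\calG:=\{\|\bfG\|_{\infty}\le\lambda_{\Lasso}/2\}$ has probability at least $1-\epsilon_{0}/2$. Each entry $\bfG^{(j,k)}=\tfrac1N\sum_{i}\int_{0}^{T}\bx_{i}^{(k)}\,\diff\bw_{i}^{(j)}$ is a continuous martingale with predictable quadratic variation $\tfrac1N\bigl(\tfrac1N\sum_{i}\int_{0}^{T}(\bx_{i}^{(k)})^{2}\diff t\bigr)$, whose mean is $\tfrac1N\bfC_{\infty}^{(k,k)}\le\kappa^{\ast}/N$. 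A self-normalised (or quadratic-variation-truncated Bernstein) tail bound for stochastic integrals then gives a sub-Gaussian deviation of order $\exp(-cNt^{2}/\kappa^{\ast})$ per entry; a union bound over the $d^{2}$ coordinates yields the factor $\log(2ed^{2}/s)$ in the lower bound on $\lambda_{\Lasso}$, while carrying the free confidence level $\epsilon_{0}$ through the tail is exactly what produces the additive term $\log(4\epsilon_{0}^{-1})/\log(2ed^{2}/s)$ in the conclusion.

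On $\calG$ the basic inequality collapses, after the routine support split $\|\bfA\|_{1}-\|\hat{\bfA}_{\Lasso}\|_{1}\le\|\bfD_{S}\|_{1}-\|\bfD_{S^{c}}\|_{1}$ and adding $\lambda_{\Lasso}\|\bfD\|_{1}$ to both sides, to
\begin{equation*}
\frac{1}{N}\sum_{i=1}^{N}\|\hat{\bfH}\bx_{i}\|_{L^{2}}^{2}+\lambda_{\Lasso}\|\bfD\|_{1}\le\frac{1}{N}\sum_{i=1}^{N}\|(\bfA-\bfA_{0})\bx_{i}\|_{L^{2}}^{2}+4\lambda_{\Lasso}\|\bfD_{S}\|_{1}.
\end{equation*}
To turn $\|\bfD_{S}\|_{1}\le\sqrt{s}\,\|\bfD\|_{2}$ into the quadratic prediction error I would establish the restricted-eigenvalue bound $\tfrac1N\sum_{i}\|\bfM\bx_{i}\|_{L^{2}}^{2}=\langle\bfM^{\top}\bfM,\hat{\bfC}\rangle\ge(\kappa_{\min}/2)\|\bfM\|_{2}^{2}$ for all $\bfM$, where $\hat{\bfC}=\tfrac1N\sum_{i}\int_{0}^{T}\bx_{i}\bx_{i}^{\top}\diff t$, by controlling $\|\hat{\bfC}-\bfC_{\infty}\|_{\op}$ below $\kappa_{\min}/2$. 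A $1/4$-net of $\bbS^{d-1}$, of cardinality at most $9^{d}$, reduces this to the scalar deviation $\bv^{\top}(\hat{\bfC}-\bfC_{\infty})\bv$, whose exponential bound of the form $8\times9^{d}\times e^{-N/(c(1+d/N))}$ is precisely the second probability in the statement. Then Young's inequality $4\lambda_{\Lasso}\sqrt{s}\,\|\bfD\|_{2}\le(\kappa_{\min}/2)\|\bfD\|_{2}^{2}+8\lambda_{\Lasso}^{2}s/\kappa_{\min}$ turns $4\lambda_{\Lasso}\|\bfD_{S}\|_{1}$ into a multiple of the prediction error of $\bfD$ plus the variance term; reconciling this with the prediction error of $\hat{\bfH}$ on the left through the triangle inequality in the prediction seminorm, and splitting the threshold on $\lambda_{\Lasso}$ into its two regimes, yields the advertised $8\lambda_{\Lasso}^{2}\kappa_{\min}^{-1}\bigl(s\vee\log(4\epsilon_{0}^{-1})/\log(2ed^{2}/s)\bigr)$.

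The step I expect to be genuinely delicate is the concentration of the \emph{non-centred} second-moment matrix $\hat{\bfC}$ (and, entrywise, of the quadratic variation controlling $\bfG$). Since $\E[\bx_{i}(t)]=\exp(t\bfA_{0})\bxi_{i}\neq\zero$, the matrix $\hat{\bfC}$ is not a pure second-order Wiener chaos, so the arguments of the ergodic, mean-zero references do not transfer directly. Following the decomposition announced in the introduction, I would split $\hat{\bfC}-\bfC_{\infty}$ into a second-order chaos piece, a first-order chaos piece arising from the interaction of the Wiener integrals with the deterministic flow $\exp(t\bfA_{0})\bxi_{i}$, and the fluctuation $\tfrac1N\sum_{i}\bxi_{i}\bxi_{i}^{\top}-\bSigma$ of the transformed initial values, the last being handled via Assumption \ref{assm:initial}. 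The technical core is to show that all three pieces concentrate in operator norm at the common rate $N^{-1/2}$ so that the net argument closes; the eigenvalue estimates $\kappa_{\max}\le\mathfrak{p}_{0}^{2}(T+\|\bSigma\|_{\op})Te^{2\mathfrak{a}_{0}T}$ and $\kappa_{\min}\ge\tfrac12\mathfrak{p}_{0}^{-2}T^{2}e^{-2\mathfrak{a}_{0}T}$ then convert the abstract rates into the constant $c$ depending only on $T,\mathfrak{a}_{0},\mathfrak{p}_{0},K,\|\bSigma\|_{\op}$.
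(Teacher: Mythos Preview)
Your overall architecture---basic inequality plus a high-probability restricted-eigenvalue event for $\hat{\bfC}_N$ plus a high-probability control on the martingale term---is correct, and your treatment of the non-centred covariance concentration (three-piece Wiener-chaos decomposition, $\epsilon$-net, sub-Gaussian initials) matches the paper almost exactly. The gap is in the martingale step.

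You propose to bound $\langle\bfD,\bfG\rangle\le\|\bfG\|_{\infty}\|\bfD\|_{1}$ and to make $\|\bfG\|_{\infty}\le\lambda_{\Lasso}/2$ by a union bound over the $d^{2}$ entries. This does \emph{not} recover the statement as written. A union bound yields the requirement $\lambda_{\Lasso}\gtrsim\sqrt{\kappa^{\ast}N^{-1}\log(d^{2}/\epsilon_{0})}$, i.e.\ $\log(d^{2})$ rather than $\log(d^{2}/s)$, and---more damagingly---it forces the confidence level into the tuning threshold. The propagated error then carries a term of order $s\log(4\epsilon_{0}^{-1})$ rather than the $\log(4\epsilon_{0}^{-1})$ claimed in the conclusion; equivalently, you cannot produce the additive slack $s\vee\bigl(\log(4\epsilon_{0}^{-1})/\log(2ed^{2}/s)\bigr)$ with a $\lambda_{\Lasso}$ that is independent of $\epsilon_{0}$. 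So the sentence ``a union bound over the $d^{2}$ coordinates yields the factor $\log(2ed^{2}/s)$'' is simply false, and your explanation of where the $\epsilon_{0}$ term comes from does not match what a $\|\bfG\|_{\infty}$ argument actually delivers.

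The paper obtains the sharp form by controlling the martingale against the mixed norm $\|\bfB\|_{\Slope}:=\|\bfB\|_{\ast}\vee\sqrt{\log(4\epsilon_{0}^{-1})}\,\|\bfB\|_{2}$ (where $\|\cdot\|_{\ast}$ is the sorted-$\ell^{1}$ norm with weights $\sqrt{\log(2d^{2}/i)}$), showing via a generic-chaining / Gaussian-width argument that $\sup_{\bfB\neq\bfO}\langle\bfM_{N}(T),\bfB\rangle/\|\bfB\|_{\Slope}\le c_{\Lasso}\sqrt{\kappa^{\ast}/N}$ on the good event. The sorted-entry inequalities of Bellec--Lecu\'e--Tsybakov then convert $\|\bfB\|_{\ast}$ into $\sqrt{s\log(2ed^{2}/s)}\,\|\bfB\|_{2}$ plus a tail piece that is absorbed by the penalty, and a two-case analysis (comparing the $\|\cdot\|_{\ast}$ and $\sqrt{\log(4\epsilon_{0}^{-1})}\|\cdot\|_{2}$ branches of $\|\cdot\|_{\Slope}$) is precisely what produces the maximum $s\vee\log(4\epsilon_{0}^{-1})/\log(2ed^{2}/s)$. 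This machinery is the missing ingredient in your sketch. A secondary point: the paper's basic inequality is derived from the KKT conditions rather than the bare optimality inequality, which yields an extra term $-N^{-1}\sum_{i}\|(\bfA-\hat{\bfA}_{\Lasso})\bx_{i}\|_{L^{2}}^{2}$ on the right; this is what lets the Young-inequality quadratic be absorbed directly, without the triangle-inequality detour you allude to.
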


We can apply this result to derive the bounds for the errors of Lasso under various norms.
Let us define the following positive integer: for each $\epsilon_{1}>0$,
\begin{equation*}
    N_{0}(\epsilon_{1}):=\min\left\{N\in\N\colon 8\times9^{d}\times \exp\left(-\frac{N}{c(1+d/N)}\right)\le \frac{\epsilon_{1}}{2}\right\}.
\end{equation*}
For the sample size $N>N_{0}(\epsilon_{1})$, we obtain the following high-probability bounds for the errors of the Lasso estimation.
\begin{corollary}\label{cor:upper:lasso}
    Suppose that Assumptions \ref{assm:eigen} and \ref{assm:initial} hold, and let $s=\|\bfA_{0}\|_{0}$.
    For any $N>N_{0}(\epsilon_{1})$,
    with probability $1-(\epsilon_{0}+\epsilon_{1})/2$, the following three inequalities hold true simultaneously:
    \begin{align*}
        \frac{1}{N}\sum_{i=1}^{N}\left\|\left(\hat{\bfA}_{\Lasso}-\bfA_{0}\right)\bx_{i}\right\|_{L^{2}}^{2}&\le \frac{8s\lambda_{\Lasso}^{2}}{\kappa_{\min}}\left(1\vee \frac{\log(4\epsilon_{0}^{-1})}{s\log(2ed^{2}/s)}\right),\\
        \left\|\hat{\bfA}_{\Lasso}-\bfA_{0}\right\|_{2}^{2}&\le \frac{16s\lambda_{\Lasso}^{2}}{\kappa_{\min}^{2}}\left(1\vee \frac{\log(4\epsilon_{0}^{-1})}{s\log(2ed^{2}/s)}\right),\\
        \left\|\hat{\bfA}_{\Lasso}-\bfA_{0}\right\|_{1}&\le \frac{8s\lambda_{\Lasso}}{\kappa_{\min}}\left(1\vee \frac{\log(4\epsilon_{0}^{-1})}{s\log(2ed^{2}/s)}\right).
    \end{align*}
\end{corollary}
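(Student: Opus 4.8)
The plan is to deduce all three bounds from Proposition \ref{prop:upper:lasso} by instantiating it at the feasible choice $\bfA=\bfA_{0}$. Since $s=\|\bfA_{0}\|_{0}$, the matrix $\bfA_{0}$ satisfies the constraint $\|\bfA_{0}\|_{0}\le s$, so the proposition applies. With this choice the first term on its right-hand side, $\tfrac1N\sum_{i}\|(\bfA_{0}-\bfA_{0})\bx_{i}\|_{L^{2}}^{2}$, vanishes, and I would rewrite the remaining quantity as
\begin{equation*}
    \frac{8\lambda_{\Lasso}^{2}}{\kappa_{\min}}\left(s\vee\frac{\log(4\epsilon_{0}^{-1})}{\log(2ed^{2}/s)}\right)=\frac{8s\lambda_{\Lasso}^{2}}{\kappa_{\min}}\left(1\vee\frac{\log(4\epsilon_{0}^{-1})}{s\log(2ed^{2}/s)}\right).
\end{equation*}
Writing $\bfB:=\hat{\bfA}_{\Lasso}-\bfA_{0}$, the proposition then reads $\tfrac1N\sum_{i}\|\bfB\bx_{i}\|_{L^{2}}^{2}+\lambda_{\Lasso}\|\bfB\|_{1}\le \tfrac{8s\lambda_{\Lasso}^{2}}{\kappa_{\min}}(1\vee\cdots)$. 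Because both summands on the left are nonnegative, dropping the $\ell^{1}$ term yields the first (prediction) inequality, and dropping the prediction term and dividing by $\lambda_{\Lasso}$ yields the third ($\ell^{1}$) inequality directly.

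For the second (Frobenius) bound I would convert the prediction error into a quadratic form in the empirical second-moment matrix. Setting $\widehat{\bfC}_{N}:=\tfrac1N\sum_{i=1}^{N}\int_{0}^{T}\bx_{i}(t)\bx_{i}(t)^{\top}\diff t$, a direct computation gives $\tfrac1N\sum_{i}\|\bfB\bx_{i}\|_{L^{2}}^{2}=\tr(\bfB^{\top}\bfB\,\widehat{\bfC}_{N})\ge \lambda_{\min}(\widehat{\bfC}_{N})\,\|\bfB\|_{2}^{2}$, using that $\bfB^{\top}\bfB$ and $\widehat{\bfC}_{N}$ are positive semidefinite. The remaining ingredient is the high-probability lower bound $\lambda_{\min}(\widehat{\bfC}_{N})\ge \kappa_{\min}/2$, which is exactly the concentration of the sample covariance matrix around $\bfC_{\infty}$ supplied by Assumption \ref{assm:initial} and controlled by the event of probability $8\times9^{d}\times e^{-N/(c(1+d/N))}$ appearing in Proposition \ref{prop:upper:lasso}. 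Combining this with the already-established prediction bound gives
\begin{equation*}
    \|\bfB\|_{2}^{2}\le \frac{2}{\kappa_{\min}}\cdot\frac{1}{N}\sum_{i=1}^{N}\|\bfB\bx_{i}\|_{L^{2}}^{2}\le \frac{16s\lambda_{\Lasso}^{2}}{\kappa_{\min}^{2}}\left(1\vee\frac{\log(4\epsilon_{0}^{-1})}{s\log(2ed^{2}/s)}\right),
\end{equation*}
where the factor $2=(\kappa_{\min}/2)^{-1}\kappa_{\min}$ accounts for the constant $16$ in place of $8$.

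Finally I would handle the probability bookkeeping. The proposition holds with probability $1-\epsilon_{0}/2-8\times9^{d}\times e^{-N/(c(1+d/N))}$, and by the definition of $N_{0}(\epsilon_{1})$ the second deficit is at most $\epsilon_{1}/2$ once $N>N_{0}(\epsilon_{1})$, giving overall probability at least $1-(\epsilon_{0}+\epsilon_{1})/2$. The point I expect to be the main obstacle—or at least the step requiring care—is arguing that no further union bound is needed: the eigenvalue estimate $\lambda_{\min}(\widehat{\bfC}_{N})\ge\kappa_{\min}/2$ used for the Frobenius bound must hold on the \emph{same} event that already underlies the proposition (the covering-argument concentration for $\widehat{\bfC}_{N}$), rather than on an independent event. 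Confirming that these events coincide is what allows the three inequalities to hold \emph{simultaneously} at the single probability level $1-(\epsilon_{0}+\epsilon_{1})/2$, matching the stated bound exactly.
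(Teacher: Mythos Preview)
Your proposal is correct and follows essentially the same approach as the paper: instantiate Proposition~\ref{prop:upper:lasso} at $\bfA=\bfA_{0}$ to get the prediction and $\ell^{1}$ bounds immediately, and derive the $\ell^{2}$ bound via the eigenvalue lower bound $\lambda_{\min}(\hat{\bfC}_{N})\ge\kappa_{\min}/2$, which---as you correctly anticipated---is precisely the event \eqref{eq:lasso:upper:regular-covariance} already assumed in the proof of the proposition, so no extra union bound is needed.
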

Hence, we obtain the rates of convergence of the Lasso estimation in several norms.
For instance, the rate of convergence of Lasso in $\ell^{2}$-norm is $\sqrt{s\log(d^{2}/s)/N}$ (by choosing the minimal $\lambda_{\Lasso}$).
In the discussion on lower bounds for errors below, we see that this rate is indeed optimal in a minimax sense.
Note that this result coincides with that of \citet{dexheimer2024lasso} except for $N$ (in our result) and $T$ (in their result).
Hence, as we can imagine, $N$ in repeated measurements plays the role of $T$ in long-term observations even under high-dimensional estimation problems.

\begin{remark}
    Due to the definition of $N_{0}(\epsilon_{1})$, which is adapted from \citet{dexheimer2024lasso}, our high-probability bounds are non-vacuous only under asymptotic regimes such that $s\ll N$ and $d\ll N$, while they allow $d^{2} \gg N$.
    This is seemingly restrictive but, in practice, quite mild.
    It is because most of the diagonal elements of $\bfA_{0}$ must be non-zero; that is, the evolution of each coordinate is dependent on itself.
    Under such self-dependence, $d\le s$ holds, and thus this restriction is not problematic in many situations.
    Note that the additional asymptotic setting $d\ll N$ simplifies the argument around eigenvalues.
    In particular, what we need to guarantee is only the ordinary non-degeneracy of the $d\times d$ sample second moment matrix, which can be derived by the elementary $\epsilon$-net argument \citep{vershynin2018high}.
    An original motivation of this approach by \citet{dexheimer2024lasso} is due to the proof strategy using the generic chaining argument for a martingale term, which depends on the largest eigenvalues of a sample second moment matrix rather than the largest diagonal element of it \citep[e.g., see][]{ciolek2020dantzig}.
    But the approach is useful for deriving a concise proof under the mild condition $d\ll N$, and thus we employ it.
\end{remark}

We also derive the following proposition bounding the error of the Slope estimator.
\begin{proposition}\label{prop:upper:slope}
    Suppose that Assumptions \ref{assm:eigen} and \ref{assm:initial} hold, and let $s=\|\bfA_{0}\|_{0}$.
    Fix a constant $c_{\Slope}\ge c_{\Lasso}\sqrt{\kappa^{\ast}}$.
    If $\lambda_{\Slope}$ satisfies the identity
    \begin{equation*}
        \lambda_{\Slope}=\frac{2c_{\Slope}}{\sqrt{N}},
    \end{equation*}
    then, for some $c\ge $ dependent only on $T$, $\mathfrak{a}_{0}$, and $\mathfrak{p}_{0}$, for any $\epsilon_{0}\in(0,1)$, $N\in\N$, and $\bfA\in\R^{d\times d}$ with $\|\bfA\|_{0}\le s$, with probability $1-\epsilon_{0}/2-2\times9^{d}(e^{-N}+\exp(-N\kappa_{\min}^{2}/(c(1+d/N))) + \exp(-N\kappa_{\min}^{2}/(c(4+\kappa_{\min}))))$,
    \begin{align*}
        \frac{1}{N}\sum_{i=1}^{N}\left\|\left(\hat{\bfA}_{\Lasso}-\bfA_{0}\right)\bx_{i}\right\|_{L^{2}}^{2}+\frac{2c_{\Slope}\left\|\bfA-\hat{\bfA}_{\Lasso}\right\|_{\ast}}{\sqrt{N}}&\le \frac{1}{N}\sum_{i=1}^{N}\left\|\left(\bfA-\bfA_{0}\right)\bx_{i}\right\|_{L^{2}}^{2}\\
        &\quad+\frac{32c_{\Slope}^{2}\log(2ed^{2}/s)}{N\kappa_{\min}}\left(s\vee \frac{\log(4\epsilon_{0}^{-1})}{\log(2ed^{2}/s)}\right).
    \end{align*}
\end{proposition}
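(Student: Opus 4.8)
The plan is to follow the same oracle-inequality scheme as in Proposition \ref{prop:upper:lasso}, replacing the $\ell^{1}$ duality by the duality of the sorted norm $\|\cdot\|_{\ast}$ and tracking the weighted threshold that the calibration $\lambda_{\Slope}=2c_{\Slope}/\sqrt{N}$ is designed for. First I would start from the defining optimality of $\hat{\bfA}_{\Slope}$ in \eqref{eq:slope:definition}: for any competitor $\bfA$ with $\|\bfA\|_{0}\le s$,
\[
\Loss_{N}(\hat{\bfA}_{\Slope})+\lambda_{\Slope}\|\hat{\bfA}_{\Slope}\|_{\ast}\le \Loss_{N}(\bfA)+\lambda_{\Slope}\|\bfA\|_{\ast}.
\]
Since $\Loss_{N}$ is quadratic and $\diff\bx_{i}=\bfA_{0}\bx_{i}\diff t+\diff\bw_{i}$, writing $\bfB=\hat{\bfA}_{\Slope}-\bfA$ and expanding $\Loss_{N}(\bfA')-\Loss_{N}(\bfA_{0})$ produces exactly the empirical prediction energy $\tfrac{1}{2N}\sum_{i}\|(\bfA'-\bfA_{0})\bx_{i}\|_{L^{2}}^{2}$ together with a linear stochastic term $-\tfrac1N\sum_{i}\int_{0}^{T}((\bfA'-\bfA_{0})\bx_{i}(t))^{\top}\diff\bw_{i}(t)$. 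Rearranging the basic inequality I obtain
\[
\frac1N\sum_{i}\|(\hat{\bfA}_{\Slope}-\bfA_{0})\bx_{i}\|_{L^{2}}^{2}\le \frac1N\sum_{i}\|(\bfA-\bfA_{0})\bx_{i}\|_{L^{2}}^{2}+2\langle \bfB,\bG\rangle+2\lambda_{\Slope}(\|\bfA\|_{\ast}-\|\hat{\bfA}_{\Slope}\|_{\ast}),
\]
where $\bG$ is the score matrix with entries $\bG^{(j,k)}=\tfrac1N\sum_{i}\int_{0}^{T}\bx_{i}^{(k)}(t)\,\diff\bw_{i}^{(j)}(t)$. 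This reduces the proof to a deterministic rearrangement plus control of the single random quantity $\bG$.

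Next I would bound the stochastic term by the H\"older pairing for the sorted norm, $\langle\bfB,\bG\rangle\le\|\bfB\|_{\ast}\,\|\bG\|_{\ast}^{\circ}$, where the dual quantity is $\|\bG\|_{\ast}^{\circ}=\max_{1\le k\le d^{2}}(\sum_{i\le k}\vect(\bG)_{i}^{\sharp})/(\sum_{i\le k}\lambda_{i})$. Conditionally on the paths $\{\bx_{i}\}$, each stochastic integral is Gaussian with variance governed by $\tfrac1N\sum_i\int_0^T(\bx_i^{(k)})^2\diff t$, whose expectation is a diagonal block of $\bfC_{\infty}$; hence on the event that the sample second-moment operator stays comparable to $\bfC_{\infty}$, the sorted entries obey $\vect(\bG)_{i}^{\sharp}\lesssim\sqrt{\kappa^{\ast}\log(2ed^{2}/i)/N}$ up to the confidence correction $\log(4\epsilon_{0}^{-1})$. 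With the weight sequence $\{\lambda_{i}\}$ this yields $\|\bG\|_{\ast}^{\circ}\le \lambda_{\Slope}/2$ on an event of probability at least $1-\epsilon_{0}/2$, which is precisely the reason for requiring $c_{\Slope}\ge c_{\Lasso}\sqrt{\kappa^{\ast}}$ and fixing $\lambda_{\Slope}=2c_{\Slope}/\sqrt{N}$.

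I would then perform the sorted-norm cone manipulation: on $\{\|\bG\|_{\ast}^{\circ}\le\lambda_{\Slope}/2\}$, combining $2\langle\bfB,\bG\rangle\le\lambda_{\Slope}\|\bfB\|_{\ast}$ with the decomposition of $\|\bfA\|_{\ast}-\|\hat{\bfA}_{\Slope}\|_{\ast}$ against the at most $s$ nonzero coordinates of $\bfA$ lets the off-support contribution be absorbed, leaving $\lambda_{\Slope}\|\bfB\|_{\ast}$ on the left while the right retains only a term supported on the $s$ largest coordinates of $\bfB$. Bounding that term by Cauchy--Schwarz gives a factor $\sqrt{\sum_{i\le s}\lambda_{i}^{2}}\asymp\sqrt{s\log(2ed^{2}/s)}$ times $\|\bfB\|_{2}$, after which a restricted-eigenvalue estimate $\|\bfB\|_{2}^{2}\lesssim \kappa_{\min}^{-1}\tfrac1N\sum_i\|\bfB\bx_i\|_{L^2}^2$ and Young's inequality absorb $\|\bfB\|_{2}$ into the prediction energy; since $\lambda_{\Slope}^{2}=4c_{\Slope}^{2}/N$, this produces exactly the stated $\tfrac{32 c_{\Slope}^{2}\log(2ed^{2}/s)}{N\kappa_{\min}}(s\vee \log(4\epsilon_{0}^{-1})/\log(2ed^{2}/s))$ remainder.

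The main obstacle is the control of the sample second-moment operator that underlies both the Gaussian score bound and the restricted-eigenvalue step, because the OU paths are non-centred. Following the decomposition announced in the introduction, I would split $\tfrac1N\sum_i\int_0^T\bx_i\bx_i^\top\diff t-\bfC_\infty$ into a second-order Wiener chaos part, a first-order Wiener chaos part arising from the non-centred drift, and a part coming from the transformed initial values $\exp(t\bfA_0)\bxi_i$, each of which is shown to concentrate at the same order. An $\epsilon$-net of cardinality $9^{d}$ over $\bbS^{d-1}$ turns these into operator-norm deviation bounds, and the three pieces generate the three exponential terms $e^{-N}$, $\exp(-N\kappa_{\min}^{2}/(c(1+d/N)))$, and $\exp(-N\kappa_{\min}^{2}/(c(4+\kappa_{\min})))$ in the failure probability. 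Establishing that all three chaos components concentrate at a common rate, and that the resulting lower bound on the Gram operator is the correct $\kappa_{\min}$, is the delicate part; once it is in hand the remainder of the argument is identical in structure to the Lasso case and uses Assumption \ref{assm:initial} only through the subgaussian control of $\|\tfrac1N\sum_i\bxi_i\bxi_i^\top\|_{\op}$.
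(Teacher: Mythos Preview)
Your outline has the right architecture, but the control of the stochastic term contains a genuine error and diverges from what the paper does. You assert that conditionally on the paths $\{\bx_i\}$ each entry $\bG^{(j,k)}=\tfrac1N\sum_i\int_0^T\bx_i^{(k)}\,\diff\bw_i^{(j)}$ is Gaussian; but in the OU model the path $\bx_i$ determines the driving noise via $\diff\bw_i=\diff\bx_i-\bfA_0\bx_i\,\diff t$, so this conditioning renders the integral deterministic, not Gaussian. The paper replaces this heuristic by Bernstein's inequality for continuous martingales on the good event $Q_N$, showing that $\bfB\mapsto\langle\bfM_N(T),\bfB\rangle\ind_{Q_N}$ is a subgaussian process in the $\ell^2$-metric, and then applies generic chaining (Proposition~\ref{prop:martingale}). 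Crucially the resulting bound is not on the sorted dual $\|\bG\|_*^\circ$ but on $\sup_{\bfB\neq\bfO}\langle\bfM_N(T),\bfB\rangle/\|\bfB\|_{\Slope}$ with $\|\bfB\|_{\Slope}=\|\bfB\|_*\vee\sqrt{\log(4\epsilon_0^{-1})}\|\bfB\|_2$. The two-case split---whether the $\|\cdot\|_*$ branch or the $\sqrt{\log(4\epsilon_0^{-1})}\|\cdot\|_2$ branch is active---is exactly what produces the $s\vee\log(4\epsilon_0^{-1})/\log(2ed^2/s)$ structure in the remainder. A pure bound $\|\bG\|_*^\circ\le\lambda_{\Slope}/2$ with $\lambda_{\Slope}$ independent of $\epsilon_0$ is not available here (the entries of $\bG$ share the same sample paths and are not independent, so the order-statistic heuristic breaks), and even if it were, it would not generate the second branch of the maximum.

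A second gap sits in your basic inequality. From the optimality of $\hat{\bfA}_{\Slope}$ alone you obtain the displayed inequality \emph{without} the term $-\tfrac1N\sum_i\|\bfB\bx_i\|_{L^2}^2$; the paper instead invokes the first-order KKT condition (Lemma~\ref{lem:upper:GM19}) to secure that extra negative quadratic on the right-hand side. That term is what absorbs $\|\bfB\|_2$ after Young's inequality for a general comparison point $\bfA\neq\bfA_0$: with your weaker inequality the absorption works only when $\bfA=\bfA_0$, so the oracle form of the proposition is not reached. Your description of the Gram-matrix concentration (second-order chaos, first-order chaos from non-centred drift, sample covariance of transformed initials, $\epsilon$-net) matches the paper precisely; it is the two points above that need repair.
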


We derive the bounds for the errors of Slope under some norms.
\begin{corollary}\label{cor:upper:slope}
    Suppose that Assumptions \ref{assm:eigen} and \ref{assm:initial} hold, and let $s=\|\bfA_{0}\|_{0}$.
    Let $\epsilon_{0},\epsilon_{1}\in(0,1)$.
    For any $N>N_{0}(\epsilon_{1})$, 
    with probability $1-(\epsilon_{0}+\epsilon_{1})/2$, the following inequalities hold true simultaneously:
    \begin{align*}
        \frac{1}{N}\sum_{i=1}^{N}\left\|\left(\hat{\bfA}_{\Slope}-\bfA_{0}\right)\bx_{i}\right\|_{L^{2}}^{2}&\le \frac{32sc_{\Slope}^{2}\log(2ed^{2}/s)}{N\kappa_{\min}}\left(1\vee \frac{\log(4\epsilon_{0}^{-1})}{s\log(2ed^{2}/s)}\right),\\
        \left\|\hat{\bfA}_{\Slope}-\bfA_{0}\right\|_{2}^{2}&\le \frac{64sc_{\Slope}^{2}\log(2ed^{2}/s)}{N\kappa_{\min}^{2}}\left(1\vee \frac{\log(4\epsilon_{0}^{-1})}{s\log(2ed^{2}/s)}\right),\\
        \left\|\hat{\bfA}_{\Slope}-\bfA_{0}\right\|_{\ast}&\le \frac{16sc_{\Slope}^{2}\log(2ed^{2}/s)}{\sqrt{N}\kappa_{\min}}\left(1\vee \frac{\log(4\epsilon_{0}^{-1})}{s\log(2ed^{2}/s)}\right),\\
        \left\|\hat{\bfA}_{\Slope}-\bfA_{0}\right\|_{1}&\le \frac{16sc_{\Slope}^{2}\log(2ed^{2}/s)}{\sqrt{N}\kappa_{\min}\log2}\left(1\vee \frac{\log(4\epsilon_{0}^{-1})}{s\log(2ed^{2}/s)}\right).
    \end{align*}
\end{corollary}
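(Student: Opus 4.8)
The plan is to read off all four inequalities from the oracle inequality of Proposition~\ref{prop:upper:slope} by specialising its free matrix $\bfA$ and then converting the resulting prediction-type bound into the stated norm bounds. Every inequality will be established on one and the same high-probability event of Proposition~\ref{prop:upper:slope}; the role of the constraint $N>N_0(\epsilon_1)$ is to drive the $9^d$-type remainder terms of that proposition below $\epsilon_1/2$, so that the event carries probability at least $1-(\epsilon_0+\epsilon_1)/2$ and the four inequalities hold on it simultaneously.

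First I would substitute $\bfA=\bfA_0$, which is admissible since $\|\bfA_0\|_0=s$. The approximation term $\frac{1}{N}\sum_i\|(\bfA-\bfA_0)\bx_i\|_{L^2}^2$ then vanishes and Proposition~\ref{prop:upper:slope} reduces to
\begin{equation*}
    \frac{1}{N}\sum_{i=1}^{N}\left\|\left(\hat{\bfA}_{\Slope}-\bfA_0\right)\bx_i\right\|_{L^2}^2+\frac{2c_{\Slope}\|\bfA_0-\hat{\bfA}_{\Slope}\|_{\ast}}{\sqrt{N}}\le \frac{32c_{\Slope}^2\log(2ed^2/s)}{N\kappa_{\min}}\left(s\vee\frac{\log(4\epsilon_0^{-1})}{\log(2ed^2/s)}\right).
\end{equation*}
Both left-hand terms are non-negative, and $s\vee\frac{\log(4\epsilon_0^{-1})}{\log(2ed^2/s)}=s\bigl(1\vee\frac{\log(4\epsilon_0^{-1})}{s\log(2ed^2/s)}\bigr)$. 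Dropping the Slope-norm term gives the first (prediction) inequality verbatim; dropping the prediction term and multiplying through by $\sqrt{N}/(2c_{\Slope})$ gives the Slope-norm bound, where I use $c_{\Slope}\ge 1$ to pass from $c_{\Slope}$ to $c_{\Slope}^2$ in matching the stated constant.

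The remaining two bounds are norm comparisons applied to $\bfD:=\hat{\bfA}_{\Slope}-\bfA_0$. For the $\ell^1$ bound, the smallest Slope weight is $\lambda_{\min}=\sqrt{\log 2}$, so $\|\bfD\|_1\le\lambda_{\min}^{-1}\|\bfD\|_{\ast}$, and since $\sqrt{\log 2}\ge\log 2$ we also have $\|\bfD\|_1\le(\log 2)^{-1}\|\bfD\|_{\ast}$; inserting the Slope-norm bound yields the fourth inequality. For the $\ell^2$ (Frobenius) bound I would rewrite the prediction error as a quadratic form in the $d\times d$ sample second-moment matrix $\hat{\bfC}_N:=\frac{1}{N}\sum_i\int_0^T\bx_i(t)\bx_i(t)^{\top}\diff t$, namely $\frac{1}{N}\sum_i\|\bfD\bx_i\|_{L^2}^2=\tr(\bfD\,\hat{\bfC}_N\,\bfD^{\top})=\sum_{j}\bfD_{j,:}\hat{\bfC}_N\bfD_{j,:}^{\top}$, a sum over the rows $\bfD_{j,:}$ of $\bfD$. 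On the same event the ordinary non-degeneracy estimate for $\hat{\bfC}_N$---the $\epsilon$-net control underlying the $9^d$ remainder terms, and exactly the source of the factor $\kappa_{\min}$ in the oracle bound---gives $\hat{\bfC}_N\succeq(\kappa_{\min}/2)\bfI$, so that each row contributes at least $(\kappa_{\min}/2)\|\bfD_{j,:}\|_2^2$ and hence $\frac{1}{N}\sum_i\|\bfD\bx_i\|_{L^2}^2\ge(\kappa_{\min}/2)\|\bfD\|_2^2$. Combining this with the prediction bound produces the factor $2/\kappa_{\min}$, i.e.\ the $\kappa_{\min}^2$ in the denominator, giving the second inequality.

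I expect the only genuinely delicate point to be this $\ell^2$ step, and specifically the availability of $\lambda_{\min}(\hat{\bfC}_N)\ge\kappa_{\min}/2$: it is not part of the statement of Proposition~\ref{prop:upper:slope} but of the event on which that proposition is proved, so I would verify that the non-degeneracy estimate holds on precisely the event of probability $1-(\epsilon_0+\epsilon_1)/2$ rather than on a strictly smaller one. The remaining steps---discarding non-negative summands, one division, and two elementary norm comparisons---are routine. A secondary bookkeeping task is to confirm that the Slope-specific remainder terms $e^{-N}$, $\exp(-N\kappa_{\min}^2/(c(1+d/N)))$ and $\exp(-N\kappa_{\min}^2/(c(4+\kappa_{\min})))$ are each at most $\epsilon_1/2$ once $N>N_0(\epsilon_1)$, which amounts to tracing the constant $c$ of Proposition~\ref{prop:upper:slope} through the definition of $N_0(\epsilon_1)$.
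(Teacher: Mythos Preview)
Your proposal is correct and follows essentially the same route as the paper: specialise the oracle inequality of Proposition~\ref{prop:upper:slope} at $\bfA=\bfA_{0}$, read off the prediction and $\|\cdot\|_{\ast}$ bounds directly, use $\|\cdot\|_{\ast}\ge(\log 2)\|\cdot\|_{1}$ for the $\ell^{1}$ bound, and invoke $\lambda_{\min}(\hat{\bfC}_{N})\ge\kappa_{\min}/2$ (which holds on the very event underlying the proposition) for the $\ell^{2}$ bound. The bookkeeping concerns you flag---$c_{\Slope}\ge1$ and the match between the remainder terms and $N_{0}(\epsilon_{1})$---are minor and handled implicitly in the paper, whose proof of this corollary is just a one-line reference back to the Lasso case.
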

This corollary shows the rate of convergence of the Slope estimation.
For example, that in $\ell^{2}$-norm is $\sqrt{s\log(d^{2}/s)/N}$, which is shown to be minimax optimal below.

\subsubsection{Differences between Lasso and Slope}
Let us remark on the differences between the Lasso estimation and Slope estimation from the statistical and computational perspectives.

We first compare their statistical guarantees (Propositions \ref{prop:upper:lasso} and \ref{prop:upper:slope}) and observe an advantage of Slope.
While both the guarantees derive the same rate of convergence by selecting minimal regularization coefficients, the dependence structures of the minimal coefficients on the unknown sparsity level $s=\|\bfA_{0}\|_{0}$ are different.
Specifically, although the minimal $\lambda_{\Lasso}=2c_{\Lasso}\sqrt{\kappa^{\ast}\log(2ed^{2}/s)/N}$ contains the unknown sparsity level $s$, $\lambda_{\Slope}=2c_{\Lasso}\sqrt{\kappa^{\ast}}/\sqrt{N}$ is independent of $s$.
This independence is often referred to as a hopeful theoretical property of the Slope estimator \citep[e.g.,][]{bellec2018slope,dexheimer2024lasso}.

We also remark on the advantage of Lasso over Slope in computation.
The computational derivation of Lasso \eqref{eq:lasso:definition} is fast since the regularization $\|\cdot\|_{1}$ is \emph{separable}.
For example, the ISTA and FISTA algorithms \citep{beck2009fast} are typical for the optimization of the Lasso problem \eqref{eq:lasso:definition}, and they use the separability of the problem for efficient computation.
On the other hand, $\|\cdot\|_{\ast}$ in the Slope estimation \eqref{eq:slope:definition} is non-separable.
While several studies \citep{bu2019algorithmic,larsson2023coordinate} have analysed fast optimization of the Slope problem \eqref{eq:slope:definition}, its efficient computation is still in development in comparison to the Lasso problem \eqref{eq:lasso:definition}.

\subsection{Lower bounds for errors}
We show the minimax optimal rate of our problem.

\begin{proposition}\label{prop:lower}
    Let $d\ge 4$, $s\ge 2d$, $T=1$, and $\bxi_{i}=\zero_{d}$ for all $i=1,\ldots,n$.
    Fix a function $\ell:[0,\infty)\to[0,\infty)$ such that it is monotone increasing, $\ell(0)=0$, and $\ell\not\equiv0$.
    For some constants $c,c'>0$ dependent only on $\ell$, it holds that
    \begin{equation}
        \inf_{\hat{\bfA}}\sup_{\substack{\bfA\in\R^{d\times d}\colon \\\|\bfA\|_{0}\le s_{0}}}\E_{\bfA}\left[\ell\left(c\psi_{\epsilon,p}^{-1}\left\|\hat{\bfA}-\bfA\right\|_{p}\right)\right]\ge c',\ \text{where}\ \psi_{\epsilon,p}:=s^{1/p}\sqrt{N^{-1}\log(ed^{2}/s)},
    \end{equation}
    where the infimum is taken over all estimators of $\bfA$.
\end{proposition}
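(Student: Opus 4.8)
The plan is to establish the bound by the standard reduction of estimation to multiple hypothesis testing, following Tsybakov's method based on Fano's inequality. First I would reduce the loss $\ell$ to a tail probability: since $\ell$ is monotone increasing with $\ell(0)=0$ and $\ell\not\equiv 0$, there is some $a>0$ with $\ell(a)>0$, so that by monotonicity and Markov's inequality,
\begin{equation*}
\E_{\bfA}\left[\ell\left(c\psi_{\epsilon,p}^{-1}\left\|\hat{\bfA}-\bfA\right\|_{p}\right)\right]\ge \ell(a)\,\Pr_{\bfA}\left(\left\|\hat{\bfA}-\bfA\right\|_{p}\ge a c^{-1}\psi_{\epsilon,p}\right).
\end{equation*}
It therefore suffices to exhibit a finite family of $s$-sparse drifts over which no estimator can be within radius $\psi:=ac^{-1}\psi_{\epsilon,p}$ of the truth with probability bounded away from zero; the constant $c$ is fixed at the end to align $\psi$ with the separation produced below, and $c'$ becomes $\ell(a)$ times the resulting Fano constant.

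Next I would construct the hypotheses via a sparse Varshamov--Gilbert bound: the combinatorial lemma furnishing, for $s\le d^{2}$, a family of $\{0,1\}$-valued vectors in $\R^{d^{2}}$ of sparsity $s$ with pairwise Hamming distance at least $s/c_{1}$ and cardinality $\log M\ge c_{2}s\log(ed^{2}/s)$. Setting $\bfA^{(0)}=\zero$ and letting $\bfA^{(1)},\dots,\bfA^{(M)}$ be the corresponding matrices with nonzero entries equal to a level $\delta>0$ to be fixed, each obeys $\|\bfA^{(j)}\|_{0}\le s$, the separation $\|\bfA^{(j)}-\bfA^{(k)}\|_{p}\ge (s/c_{1})^{1/p}\delta$, and $\|\bfA^{(j)}\|_{\op}\le\|\bfA^{(j)}\|_{\Frobenius}=\sqrt{s}\,\delta$. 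The hypotheses $d\ge 4$ and $s\ge 2d$ guarantee enough coordinates, and the stated form of $\log(ed^{2}/s)$, for this lemma to apply.

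The core computation is the Kullback--Leibler divergence between the laws of the $N$ i.i.d.\ paths. By Girsanov's theorem, with $\bxi_{i}=\zero$ and reference point $\bfA^{(0)}=\zero$,
\begin{equation*}
\mathrm{KL}\!\left(\Pr_{\bfA^{(j)}}^{\otimes N}\,\big\|\,\Pr_{\zero}^{\otimes N}\right)=\frac{N}{2}\,\E_{\bfA^{(j)}}\!\left[\int_{0}^{1}\left\|\bfA^{(j)}\bx(t)\right\|_{2}^{2}\diff t\right]=\frac{N}{2}\left\langle \bfA^{(j)\top}\bfA^{(j)},\,\bfC_{\infty}^{(j)}\right\rangle,
\end{equation*}
where $\bfC_{\infty}^{(j)}$ is the second moment matrix under $\bfA^{(j)}$. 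Since $\E_{\bfA^{(j)}}[\bx(t)\bx(t)^{\top}]=\int_{0}^{t}\exp((t-u)\bfA^{(j)})^{\otimes2}\diff u\preceq t\exp(2\|\bfA^{(j)}\|_{\op}t)\bI$, and $\|\bfA^{(j)}\|_{\op}\le\sqrt{s}\,\delta$ is kept below $1$, the divergence is at most $CNs\delta^{2}$ for an absolute constant $C$. Choosing $\delta^{2}=c_{3}N^{-1}\log(ed^{2}/s)$ with $c_{3}$ small makes the averaged divergence at most $\tfrac{1}{16}\log M$. The separation then reads $\|\bfA^{(j)}-\bfA^{(k)}\|_{p}\ge c_{1}^{-1/p}\sqrt{c_{3}}\,s^{1/p}\sqrt{N^{-1}\log(ed^{2}/s)}=c_{4}\psi_{\epsilon,p}$, so Fano's inequality gives $\inf_{\hat{\bfA}}\max_{j}\Pr_{\bfA^{(j)}}(\|\hat{\bfA}-\bfA^{(j)}\|_{p}\ge \tfrac12 c_{4}\psi_{\epsilon,p})\ge c''>0$; taking $c=2a/c_{4}$ and $c'=\ell(a)c''$ finishes the argument.

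The step I expect to be the main obstacle is the uniform control of $\bfC_{\infty}^{(j)}$ across the entire hypothesis family, which is what turns the Girsanov identity into a usable KL upper bound. This is precisely where the non-ergodic, i.i.d.-path structure enters: I must verify that every constructed drift has operator norm below a fixed threshold so that $\bfC_{\infty}^{(j)}$ stays comparable to the driftless Gram matrix $\int_{0}^{1}t\,\bI\,\diff t=\tfrac12\bI$. This holds because the perturbation level satisfies $\|\bfA^{(j)}\|_{\op}\le\sqrt{s}\,\delta=\sqrt{c_{3}s\log(ed^{2}/s)/N}\to 0$ exactly in the non-vacuous regime $s\log(ed^{2}/s)\ll N$, so the lower bound is meaningful on the same scale as the upper bounds. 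Once this uniform spectral control is secured, the remaining ingredients---Girsanov, the sparse Varshamov--Gilbert packing, and Fano's inequality---are routine.
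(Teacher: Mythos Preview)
Your overall plan---sparse Varshamov--Gilbert packing, Girsanov for the KL, Fano/Tsybakov reduction---matches the paper's, but your choice of hypothesis class differs in a way that leaves a genuine gap. You pack with generic sparse $\{0,\delta\}$-valued matrices and control $\bfC_{\infty}^{(j)}$ through $\|\exp(t\bfA^{(j)})\|_{\op}\le\exp(t\|\bfA^{(j)}\|_{\op})\le\exp(t\sqrt{s}\,\delta)$. This bound is useful only when $\sqrt{s}\,\delta=\sqrt{c_{3}s\log(ed^{2}/s)/N}$ is bounded, i.e.\ when $N\gtrsim s\log(ed^{2}/s)$. You flag this honestly, but the proposition is stated for \emph{all} $N$, and the missing regime cannot be recovered: monotonicity in $N$ does not help because $\psi_{\epsilon,p}$ itself blows up as $N$ shrinks, and capping $\delta$ at $1/\sqrt{s}$ makes the packing separation fall below $\psi_{\epsilon,p}$ when $N$ is small.

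The paper closes exactly this gap by a structural choice you are missing: it sets $\bfA=-\tfrac{1}{2}\bfI_{d}-w\bfB$ with $\bfB$ antisymmetric and $\{-1,0,1\}$-valued. For any such $\bfA$, Lemma~\ref{lem:gm19} gives $\int_{0}^{t}\exp((t-t')\bfA)\exp((t-t')\bfA)^{\top}\diff t'$ equal to a scalar multiple of $\bfI_{d}$ \emph{exactly}, independent of $w$ and of the particular $\bfB$. Hence the Girsanov KL between any two hypotheses is exactly a universal constant times $N\|\bfA-\bfA'\|_{\Frobenius}^{2}=Nw^{2}\|\bfB-\bfB'\|_{\Frobenius}^{2}$, and the choice $w\asymp\sqrt{N^{-1}\log(ed^{2}/s)}$ satisfies the Fano condition for every $N$ with no operator-norm restriction whatsoever. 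The $-\tfrac{1}{2}\bfI_{d}$ shift costs $d$ nonzeros and the antisymmetric $\bfB$ at most $(s-d)/2$ more, so sparsity $\le s$ is preserved; this is where the hypotheses $d\ge4$ and $s\ge2d$ are actually used. In short, the step you correctly identified as the main obstacle---uniform control of $\bfC_{\infty}^{(j)}$ across the family---is handled in the paper by an identity rather than an estimate, and that identity (antisymmetric perturbation of a scalar matrix) is the missing idea in your argument.
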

Notably, for $p=2$ and $\ell(u)=u$, the rates of convergence of Lasso and Slope guaranteed by Corollaries \ref{cor:upper:lasso} and \ref{cor:upper:slope} coincide with the minimax rate derived by this proposition.
Hence, we can conclude that the Lasso and Slope estimators are minimax rate-optimal in $\ell^{2}$-norm.
We also remark that even for $p=1$, the rates of convergence by the corollaries also match the rate by this lower bound up to a logarithmic factor.

\section{Numerical experiments}\label{sec:numerical}
We exhibit numerical experiments of Lasso and Slope estimators of the high-dimensional drift coefficients of OU processes under i.i.d.~observations.

\subsection{Setup}
Our experiments consist of 10 iterations for each dimension setting $d=5,6,\ldots,24,25$.
For each iteration, we generate 500 i.i.d.~paths of OU processes $\{\bx_{i}\}_{i=1}^{500}$, where each $\bx_{i}$ is defined on the time interval $[0,1]$ (hence, $T=1$).
We set the training data $\{\bx_{i}\}_{i=1}^{400}$ (it means that $N=400$) and the validation data $\{\bx_{i}\}_{i=401}^{500}$ for selection of regularization coefficients.
Precisely speaking, $\lambda_{\Lasso}$ and $\lambda_{\Slope}$ are chosen by cross-validation defined as
\begin{align*}
    \lambda_{\Lasso}&\in\argmin_{\lambda\in\Lambda}\frac{1}{100}\sum_{i=401}^{500}\left(-\int_{0}^{1}\left(\hat{\bfA}_{\Lasso}(\lambda)\bx_{i}(t)\right)^{\top}\diff\bx_{i}(t)+\frac{1}{2}\int_{0}^{1}\left\|\hat{\bfA}_{\Lasso}(\lambda)\bx_{i}(t)\right\|_{2}^{2}\diff t\right),\\
    \lambda_{\Slope}&\in\argmin_{\lambda\in\Lambda}\frac{1}{100}\sum_{i=401}^{500}\left(-\int_{0}^{1}\left(\hat{\bfA}_{\Slope}(\lambda)\bx_{i}(t)\right)^{\top}\diff\bx_{i}(t)+\frac{1}{2}\int_{0}^{1}\left\|\hat{\bfA}_{\Slope}(\lambda)\bx_{i}(t)\right\|_{2}^{2}\diff t\right),
\end{align*}
where $\Lambda=\{10^{-8.00},10^{-7.75},10^{-7.50},\ldots,10^{-6.00}\}$ is the logarithmic grid, and
\begin{equation*}
    \hat{\bfA}_{\Lasso}(\lambda)\in\argmin_{\bfA\in\R^{d\times d}}\left(\Loss_{400}(\bfA)+\lambda\|\bfA\|_{1}\right),\ \hat{\bfA}_{\Slope}(\lambda)\in\argmin_{\bfA\in\R^{d\times d}}\left(\Loss_{400}(\bfA)+\lambda\|\bfA\|_{\ast}\right)
\end{equation*}
For each dimension setting, we use the same $\bfA$ among 10 iterations.
$\bfA$ is determined as follows: the diagonal elements of $\bfA$ are random variables following the uniform distribution on $[-1,1]$, and the off-diagonal elements are $0$ with probability $0.8$ and random variables following the uniform distribution on $[-0.5,0.5]$ otherwise.
For integration in loss functions and path generation, we set the discretization step $\delta=0.01$.
We employ the Euler--Maruyama scheme with stepsize $\delta$ to generate paths of OU processes.

The computation is based on the \texttt{SLOPE} package in R \citep{larsson2025slope}.
Initial drafts of some simulation scripts were generated with assistance from ChatGPT 4.0 and then fully reviewed, tested, and revised by the author. All results reported use code that the author verified.
The source code and data are available at \url{https://github.com/snakakita/sparse-ou-with-iid-paths}.

\subsection{Results}
Figures \ref{fig:distance} and \ref{fig:heatmap} summarize the results of the experiment.
Figure \ref{fig:distance} shows the $d^{-1}$-scaled squared $\ell^{2}$ distances and (top) and $d^{-1}$-scaled $\ell^{1}$ (bottom) distances in our experiment.
We observe that the Lasso and Slope successfully achieve lower errors in comparison to the MLE for every $d=5,\ldots,25$ on average.
Figure \ref{fig:heatmap} illustrates the heatmap of the true value of $\bfA$, MLE, Lasso, and Slope under $d=15$.
We notice that the MLE works well on the support of the true $\bfA_{0}$, but the performance outside the support is poor.
On the other hand, the Lasso estimation and Slope estimation result in more precise estimation of most zero elements in the true $\bfA$ while keeping the performance on the support.
\begin{figure}[p]
    \centering
    \includegraphics[width=0.9\linewidth]{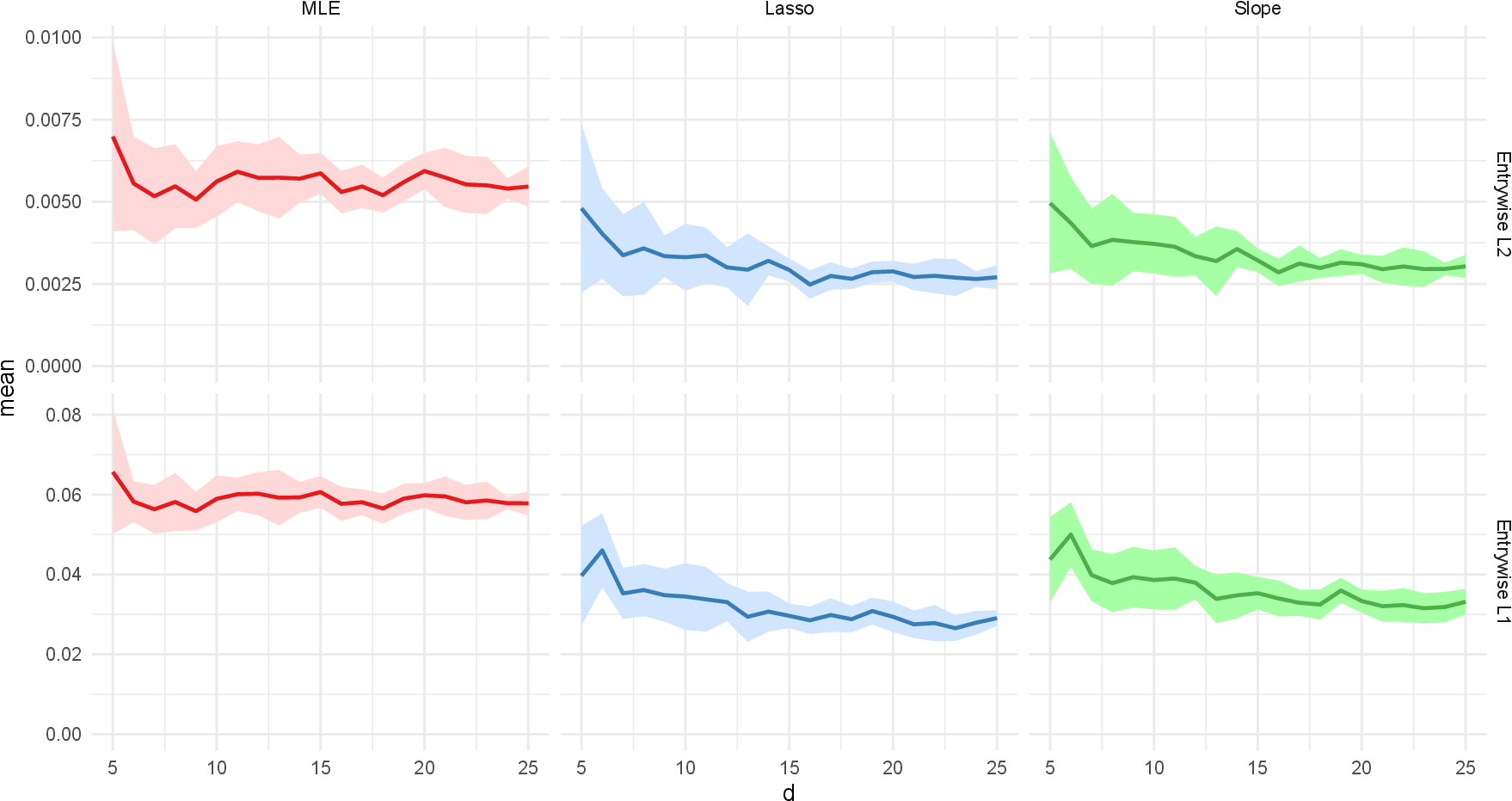}
    \caption{The comparison of $d^{-1}$-scaled squared $\ell^{2}$ distances and (top) and $d^{-1}$-scaled $\ell^{1}$ (bottom) distances between the true value and the MLE (left), Lasso (middle), and Slope (right).
    The coloured areas represent (mean) $\pm$ (standard deviation).
    }
    \label{fig:distance}
    
    \includegraphics[width=0.9\linewidth]{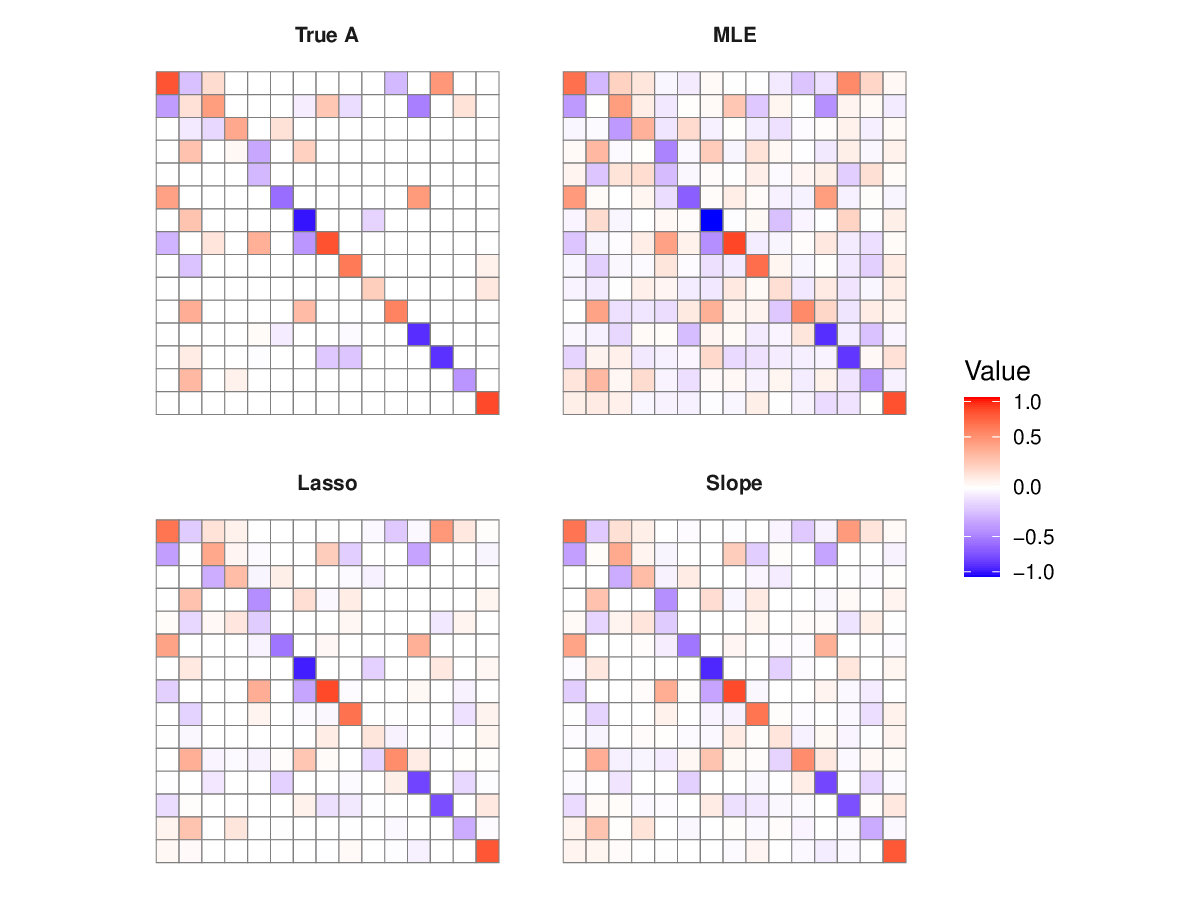}
    \caption{The heatmaps of the true value (top left), MLE (top right), Lasso (bottom left), and Slope (bottom right).
    The scale is adjusted to intensify the magnitude of small but non-zero values.}
    \label{fig:heatmap}
\end{figure}

\section{Concluding remarks}\label{sec:conclusion}
We have discussed sparse estimation of the drift parameter of high-dimensional Ornstein--Uhlenbeck processes given repeated measurements of i.i.d.~paths.
Similar to previous studies on the same problem with long-term observations under ergodicity, this study has shown the minimax rate optimality of sparse estimation methods.
In particular, the rates of convergence are formally identical to those of previous studies except for the terminal $T>0$, which is replaced with the sample size $N$ in our setup.
Although the derived result itself is an important contribution as a statistical theory, it is also important in the sense that there are many different directions for further development.

Let us remark on some concrete potential directions of future studies in line with this study.
First, estimation under nonlinearity and discrete observations is an important direction.
As \citet{ciolek2025lasso} establish sparse estimation of nonlinear parameters in ergodic SDEs, we also expect that a similar argument should hold under repeated measurements.
Second, sparse estimation of high-dimensional interacting particle systems is a significant extension of the discussion.
Since statistical estimation of interacting particle systems has gathered intense research efforts \citep{amorino2023parameter,della2023lan,nickl2025bayesian}, we are motivated to develop high-dimensional estimation theories for such systems in order to prepare useful statistical methods.
Third, estimation of high-dimensional SDEs with mixed effects is also a hopeful direction.
Since longitudinal data, one of the motivations for estimation with repeated measurements, are sometimes naturally explained via mixed effects \citep{ditlevsen2005mixed,delattre2018parametric,delattre2025quasi}, high-dimensional estimation theories of SDEs with mixed effects are important.
Finally, introducing non-sparse inductive bias in dependent models is a significant and topical problem \citep{nakakita2022benign,luo2024roti,atanasov2025risk,moniri2025asymptotics,esmailimallory2025universality,jones2025generalisation}.
Since estimation with high-dimensional SDEs under non-sparse inductive bias (such as ridge estimators) has not been discussed, the community should be interested in this direction.

\appendix
\section{Proofs of preliminary results}\label{append:preliminary}

We obtain a representation of the (scaled) log-likelihood of $\bfA$ as follows:
\begin{align*}
    \calL(\bfA)&=-\frac{1}{N}\log\frac{\bbP_{\bfA}^{N,T}}{\bbP_{\bfO}^{N,T}}(\{\bx_{i}(t)\colon i\in\{1,...,N\},t\in[0,T]\})\\
    &=\frac{1}{N}\sum_{i=1}^{N}\left(-\int_{0}^{T}\left(\bfA\bx_{i}(t)\right)^{\top}\diff \bx_{i}(t)+\frac{1}{2}\int_{0}^{T}\left(\bfA\bx_{i}(t)\right)^{\top}\left(\bfA\bx_{i}(t)\right)\diff t\right)\\
    &=\tr\left(\frac{1}{2}\bfA\hat{\bfC}_{N}\bfA^{\top}-\bfA_{0}\hat{\bfC}_{N}\bfA^{\top}-\bfM_{N}(T)\bfA^{\top}\right),
\end{align*}
where $\hat{\bfC}_{N}$ is the sample second moment matrix defined as
\begin{equation*}
    \hat{\bfC}_{N}:=\frac{1}{N}\sum_{i=1}^{N}\int_{0}^{T}\bx_{i}(t)^{\otimes2}\diff t,
\end{equation*}
and $\{\bfM_{N}(t)\colon t\in[0,T]\}$ is a matrix-valued martingale defined as
\begin{equation*}
    \bfM_{N}(t)=\frac{1}{N}\sum_{i=1}^{N}\int_{0}^{t}\diff\bw_{i}(s)(\bx_{i}(s))^{\top}.
\end{equation*}
In this Appendix \ref{append:preliminary}, we discuss concentration bounds for $\hat{\bfC}_{N}$ and $\bfM_{N}$.

\subsection{Fundamental decomposition}
We have the following exact decomposition:
\begin{equation*}
    \bx_{i}(t)=\bx_{i}^{0}(t)+\bx_{i}^{1}(t),
\end{equation*}
where $\bx_{i}^{0}(t)=\exp(t\bfA_{0})\bxi_{i}$ is a dynamics which transports the random initial value $\bxi_{i}$, and $\bx_{i}^{1}(t)=\int_{0}^{t}\exp((t-t')\bfA_{0})\diff \bw_{i}(t')$ is a centred stochastic process. 
Using this decomposition and the It\^{o} isometry, we have
\begin{align*}
    \E[\hat{\bfC}_{N}|\{\bxi_{i}\}]&=\frac{1}{N}\sum_{i=1}^{N}\int_{0}^{T}\left((\exp(t\bfA_{0})\bxi_{i})^{\otimes2}+\E\left[\left(\int_{0}^{t}\exp((t-t')\bfA_{0})\diff \bw_{i}(t')\right)^{\otimes2}\right]\right)\diff t\\
    &=\frac{1}{N}\sum_{i=1}^{N}\int_{0}^{T}\left((\exp(t\bfA_{0})\bxi_{i})^{\otimes2}+\int_{0}^{t}\exp((t-t')\bfA_{0})^{\otimes2}\diff t'\right)\diff t.
\end{align*}

\subsection{Eigenvalue behaviours}
We examine the behaviours of the eigenvalues of $\hat{\bfC}_{N}$ via Malliavin calculus and high-probability arguments.

We give a list of notation used within this section.
Let $\HS$ denote a real separable Hilbert space and $B=\{B(h)\colon h\in\HS\}$ be an isonormal Gaussian process over $\HS$, that is, a family of real centred Gaussian random variables with covariance $\E[B(h)B(h')]=\langle h,h'\rangle_{\HS}$ for any $h,h'\in\HS$.

Let us define
\begin{align*}
    \tilde{\bfC}_{N}&:=\frac{1}{N}\sum_{i=1}^{N}\int_{0}^{T}\left(\left(\exp(t\bfA_{0})\bxi_{i}\right)^{\otimes2}+\int_{0}^{t}\exp((t-t')\bfA_{0})^{\otimes2}\diff t\right)\diff t=\E[\hat{\bfC}_{N}|\{\bxi_{i}\}].
\end{align*}
We have
\begin{equation}
    \hat{\bfC}_{N}-\tilde{\bfC}_{N}=\frac{1}{N}\sum_{i=1}^{N}\left(2\int_{0}^{T}\bx_{i}^{0}(t)(\bx_{i}^{1}(t))^{\top}\diff t+\int_{0}^{T}\left(\bx_{i}^{1}(t)(\bx_{i}^{1}(t))^{\top}-\E\left[\bx_{i}^{1}(t)(\bx_{i}^{1}(t))^{\top}\right]\right)\diff t\right).
\end{equation}

\begin{proposition}[pointwise concentration]\label{prop:rep}
    Under Assumptions \ref{assm:eigen} and \ref{assm:initial}, there exists a positive constant $c\ge1$ dependent only on $T$, $\mathfrak{a}_{0}$, $\mathfrak{p}_{0}$, $K$, and $\|\bSigma\|_{\op}$ such that for all $u>0$,
    \begin{equation}
        \sup_{\bv\in\bbS^{d-1}}\Pr\left(\left|\bv^{\top}\left(\hat{\bfC}_{N}-\tilde{\bfC}_{N}\right)\bv\right|\ge u\right)\le 2e^{-N}+2\exp\left(-\frac{Nu^{2}}{c(1+d/N)}\right) + 2\exp\left(-\frac{1}{c}\frac{Nu^{2}}{1+u}\right).
    \end{equation}
\end{proposition}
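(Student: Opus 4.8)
The plan is to project the displayed decomposition of $\hat{\bfC}_{N}-\tilde{\bfC}_{N}$ onto a fixed direction $\bv\in\bbS^{d-1}$ and to treat its two summands separately, since they live in different Wiener chaoses. Writing $a_{i}(t):=\bv^{\top}\exp(t\bfA_{0})\bxi_{i}$ (deterministic given the initial value) and $b_{i}(t):=\bv^{\top}\bx_{i}^{1}(t)$, the projected fluctuation is $\bv^{\top}(\hat{\bfC}_{N}-\tilde{\bfC}_{N})\bv=F_{1}+F_{2}$, with the first-order (cross) term $F_{1}=\frac{2}{N}\sum_{i=1}^{N}\int_{0}^{T}a_{i}(t)b_{i}(t)\diff t$ and the second-order term $F_{2}=\frac{1}{N}\sum_{i=1}^{N}\int_{0}^{T}(b_{i}(t)^{2}-\E[b_{i}(t)^{2}])\diff t$. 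A union bound $\Pr(|F_{1}+F_{2}|\ge u)\le\Pr(|F_{1}|\ge u/2)+\Pr(|F_{2}|\ge u/2)$ reduces the claim to one exponential tail per term, and since every constant produced below depends only on $T,\mathfrak{a}_{0},\mathfrak{p}_{0},K,\|\bSigma\|_{\op}$ and never on $\bv$, taking the supremum over $\bv$ at the end is harmless.

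For $F_{1}$ I would condition on the initial values $\{\bxi_{i}\}$. Then each $b_{i}$ is a centred Gaussian process driven by $\bw_{i}$, so $F_{1}$ is conditionally a centred Gaussian whose variance I can compute by the stochastic Fubini theorem and the It\^{o} isometry; it is bounded by $\frac{c}{N}\cdot\frac{1}{N}\sum_{i}Z_{i}$ with $Z_{i}=\int_{0}^{T}a_{i}(t)^{2}\diff t$ and $c=c(T,\mathfrak{a}_{0},\mathfrak{p}_{0})$. The $Z_{i}$ are i.i.d.\ and subexponential by Assumption \ref{assm:initial}, so a Bernstein estimate gives $\frac{1}{N}\sum_{i}Z_{i}\le c(1+d/N)$ on an event of probability at least $1-2e^{-N}$ (the factor $1+d/N$ is a convenient rather than tight bound, obtainable e.g.\ through $\|\frac{1}{N}\sum_{i}\bxi_{i}\bxi_{i}^{\top}\|_{\op}$). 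On that event the conditional Gaussian tail yields $\Pr(|F_{1}|\ge u/2\mid\{\bxi_{i}\})\le 2\exp(-Nu^{2}/(c(1+d/N)))$, which, after accounting for the exceptional event, produces the first two summands of the claimed bound.

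For $F_{2}$ I note that it is independent of the initial values: each $\int_{0}^{T}(b_{i}(t)^{2}-\E[b_{i}(t)^{2}])\diff t$ is a centred element of the second Wiener chaos generated by $\bw_{i}$, so $F_{2}$ itself lies in the second chaos over the product Gaussian space of $\{\bw_{i}\}$. Via Malliavin calculus (hypercontractivity within a fixed chaos, or a direct quadratic-form concentration inequality) one obtains a Bernstein-type tail $\Pr(|F_{2}|\ge u/2)\le 2\exp(-c\min(Nu^{2},Nu))$, where the subgaussian scale comes from $\E[F_{2}^{2}]=O(1/N)$ (an average of $N$ independent centred terms) and the subexponential scale from the associated operator-norm (kernel) bound of order $1/N$; here Assumption \ref{assm:eigen} is used to control $\exp(t\bfA_{0})$ through $\mathfrak{a}_{0},\mathfrak{p}_{0}$. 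Rewriting $\min(Nu^{2},Nu)\ge\frac{1}{c}\frac{Nu^{2}}{1+u}$ gives the third summand. Equivalently, since the $W_{i}:=\int_{0}^{T}(b_{i}(t)^{2}-\E[b_{i}(t)^{2}])\diff t$ are i.i.d.\ and subexponential, the classical Bernstein inequality for i.i.d.\ subexponential sums delivers the same tail.

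The main obstacle is the second-chaos estimate for $F_{2}$: one must compute its variance and the operator norm of the associated kernel uniformly in $\bv\in\bbS^{d-1}$ and in the OU parameters, in order to pin down the constants giving exactly the $\frac{Nu^{2}}{1+u}$ dependence, and this is where the Malliavin-calculus apparatus is genuinely needed. By comparison, the cross term $F_{1}$ is only conditionally Gaussian and the control of $\frac{1}{N}\sum_{i}Z_{i}$ via the subgaussian initial-value assumption is routine, as is the final assembly of the three tails.
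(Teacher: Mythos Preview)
Your proposal is correct and follows the same high-level strategy as the paper: the same decomposition of $\bv^{\top}(\hat{\bfC}_{N}-\tilde{\bfC}_{N})\bv$ into a first-chaos cross term and a centred second-chaos term, the same conditioning on $\{\bxi_{i}\}$ for the cross term, and the same control of the conditional variance through $\|\frac{1}{N}\sum_{i}\bxi_{i}\bxi_{i}^{\top}\|_{\op}$ (which is where the $1+d/N$ factor and the $2e^{-N}$ exceptional probability originate).

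The execution differs in the concentration tools. The paper applies Theorem~4.1 of Nourdin--Peccati uniformly to both pieces: for $F_{1}$ it computes $\|DZ^{1}(\bv)\|_{\HS}^{2}$ and bounds it by a constant times $\|\sum_{i}\bxi_{i}\bxi_{i}^{\top}\|_{\op}$, and for $F_{2}$ it uses the self-bounding estimate $\langle DZ^{2}(\bv),-DL^{-1}Z^{2}(\bv)\rangle_{\HS}\le a\,Z^{2}(\bv)+bN$ to obtain the Bernstein-type tail directly. Your route is more elementary: noting that $F_{1}$ is conditionally Gaussian avoids Malliavin calculus entirely for that term, and your alternative of treating $F_{2}$ as an average of i.i.d.\ subexponential variables (each $W_{i}$ being a bounded-norm second-chaos functional of a single $\bw_{i}$) bypasses the self-bounding computation as well. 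Both routes give the same bound; the paper's Malliavin approach is more unified and closer in spirit to the ergodic-case arguments of Cio{\l}ek et al., while yours exploits the i.i.d.\ structure more directly.
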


The statement and proof are mostly similar to those of \citet{ciolek2020dantzig}, but we have an additional tail bound for a first-order Wiener chaos due to the non-stationarity and non-centricity of $\bx_{i}(t)$.
Fortunately, the additional bound is negligible under $N\gg d$; therefore, our problem can be reduced to those of \citet{gaiffas2019sparse} and \citet{ciolek2020dantzig} from a high-level point of view.

We introduce notation for the proof: $B$ to be an isonormal Gaussian process on $\HS=L^{2}([0,T];\R^{Nd})$ such that for any $h\in\HS$,
\begin{equation*}
    B(f)=\int_{0}^{T}h(s)^{\top}\diff\bw(s),
\end{equation*}
where $\bw(t)=[\bw_{1}(t)^{\top}\cdots\bw_{N}(t)^{\top}]^{\top}$ is an $Nd$-dimensional standard Wiener process.
It clearly holds that $\E[B(h_{1})B(h_{2})]=\langle h_{1},h_{2}\rangle_{\HS}=\int_{0}^{T}h_{1}(s)^{\top}h_{2}(s)\diff s$ for any $h_{1},h_{2}\in\HS$.
We define the set of all smooth cylindrical random variables in the form $F=f(B(h_{1}),\ldots,B(h_{n}))$, where $n\in\N$ is arbitrary and $f$ is of $\calC^{\infty}$ such that $f$ and its derivatives are of polynomial growth.
The Malliavin derivative $DF$ of $F$ is define as
\begin{equation*}
    D_{t}F=\sum_{i=1}^{n}\frac{\partial f}{\partial x_{i}}(B(h_{1}),\ldots,B(h_{n})))h_{i}(t).
\end{equation*}
We let $\bbD^{1,2}$ denote the closure of $\calS$ with respect to the norm $\|F\|_{1,2}^{2}=\E[F^{2}]+\E[\|DF\|_{\bbH}^{2}]$.

\begin{proof}
Let us fix a unit vector $\bv\in\bbS^{d-1}$ and define a matrix $\bfK_{v}=\bfI_{N}\otimes \bv^{\otimes2}\in\R^{Nd\times Nd}$.
Let $\bX^{\ell}(t)$ be 
\begin{equation*}
    \bX_{1:N}^{0}(t)=\left[\begin{matrix}
        \bx_{1}^{0}(t)\\
        \vdots\\
        \bx_{N}^{0}(t)
    \end{matrix}\right],\quad\bX_{1:N}^{1}(t)=\left[\begin{matrix}
        \bx_{1}^{1}(t)\\
        \vdots\\
        \bx_{N}^{1}(t)
    \end{matrix}\right],\quad\bX_{1:N}(t)=\bX_{1:N}^{0}(t)+\bX_{1:N}^{1}(t);
\end{equation*}
note that $\bX_{1:N}^{0}\in L^{2}([0,T];\R^{Nd})$ and $\bX_{1:N}^{1}(t)$ has a representation with $B$ such that for any $\br=[\br_{1}^{\top}\,\cdots\,\br_{N}^{\top}]^{\top}\in\R^{dN}$ ($\br_{i}\in\R^{d}$),
\begin{align*}
    &(\br^{\top}\bX_{1:N}^{1}(t))
    =\int_{0}^{t}\sum_{i=1}^{N}\br_{i}^{\top}\exp\left((t-t')\bfA_{0}\right)\diff \bw_{i}(t')\\
    &=\int_{0}^{t}\br^{\top}\left(\bfI_{N}\otimes\exp\left((t-t')\bfA_{0}\right)\right)\diff \bw(t')
    =B\left(\ind_{[0,t]}(\cdot)\left(\bfI_{N}\otimes\exp\left((t-t')\bfA_{0}\right)\right)^{\top}\br\right).
\end{align*}
Since for all $\bv\in\bbS^{d-1}$,
\begin{align*}
    &\Pr\left(\left|\bv^{\top}\left(\hat{\bfC}_{N}-\tilde{\bfC}_{N}\right)\bv\right|\ge u\right)\\
    &=\Pr\left(\left|\int_{0}^{T}\left\{\left(\bX_{1:N}(t)\right)^{\top}\bfK_{\bv}\left(\bX_{1:N}(t)\right)-\E\left[\left(\bX_{1:N}(t)\right)^{\top}\bfK_{\bv}\left(\bX_{1:N}(t)\right)\right]\right\}\diff t\right|\ge Nu\right)\\
    &\le \Pr\left( \left|2\int_{0}^{T}\left(\bX_{1:N}^{0}(t)\right)^{\top}\bfK_{\bv}\left(\bX_{1:N}^{1}(t)\right)\diff t\right|\ge Nu/2\right)\\
    &\quad+\Pr\left( \left|\int_{0}^{T}\left\{\left(\bX_{1:N}^{1}(t)\right)^{\top}\bfK_{\bv}\left(\bX_{1:N}^{1}(t)\right)-\E\left[\left(\bX_{1:N}^{1}(t)\right)^{\top}\bfK_{\bv}\left(\bX_{1:N}^{1}(t)\right)\right]\right\}\diff t\right|\ge Nu/2\right),
\end{align*}
we separately evaluate the first term (first-order Wiener chaos) and second term (second-order Wiener chaos) on the right-hand side.

(Step i) First, let us consider a bound for the first-order Wiener chaos.
Let us define the event
\begin{equation*}
    E_{1}=\left\{\bw\in \calC([0,T];\R^{Nd}),\bxi=[\bxi_{1}^{\top}\cdots\bxi_{N}^{\top}]^{\top}\in\R^{Nd}\colon\left|2\int_{0}^{T}\left(\bX_{1:N}^{0}(t)\right)^{\top}\bfK_{\bv}\left(\bX_{1:N}^{1}(t)\right)\diff t\right|\ge Nu/2\right\}.
\end{equation*}
What we need to obtain is an upper bound on $\int_{\R^{dN}}\int_{\calC([0,T];\R^{dN})}\ind_{E_{1}}(\bw,\bxi)\Pr(\diff\bw)\Pr(\diff\bxi)$ (recall the independence between $\{\bw_{i}\}$ and $\{\bxi_{i}\}$).
We begin by bounding the following conditional probability given the initial values: for fixed $\{\bxi_{i}\}\subset\R^{d}$,
\begin{equation*}
    \Pr\left( \left.\left|2\int_{0}^{T}\left(\bX_{1:N}^{0}(t)\right)^{\top}\bfK_{\bv}\left(\bX_{1:N}^{1}(t)\right)\diff t\right|\ge Nu/2\right|\{\bxi_{i}\}\right)\left(=\int_{\calC([0,T];\R^{dN})}\ind_{E_{1}}(\bw,\bxi)\Pr(\diff\bw)\right).
\end{equation*}
We define
\begin{equation*}
    Z^{1}(\bv):=\int_{0}^{T}\left(\bX_{1:N}^{0}(t)\right)^{\top}\bfK_{\bv}\left(\bX_{1:N}^{1}(t)\right)\diff t.
\end{equation*}
Since $\bfK_{\bv}\bX_{1:N}^{0}(\cdot)\in\HS$, $Z^{1}(\bv)=\langle \bfK_{\bv}\bX_{1:N}^{0}(\cdot),\bX_{1:N}^{1}(\cdot)\rangle_{\HS}$.
Proposition 3.4.3 of \citet{nualart2018introduction} yields that for all $u\in[0,T]$,
\begin{align*}
    \left(D_{u}Z^{1}(\bv)\right)^{\top}=\int_{u}^{T}\left(\bX_{1:N}^{0}(s)\right)^{\top}\bfK_{\bv}\left(\bfI_{N}\otimes\exp\left((s-u)\bfA_{0}\right)\right)\diff s.
\end{align*}
As $Z^{1}(\bv)$ is a first-order Wiener chaos \citep[Eq.~(4.6) of][]{nualart2018introduction}, it holds that $Z^{-1}(\bv)\in\bbD^{1,2}$, $L^{-1}Z_{1,\bv}=-Z_{1,\bv}$ ($L$ is the generator of the Ornstein--Uhlenbeck semigroup), and
\begin{align*}
    &\langle D Z^{1}(\bv),-DL^{-1} Z^{1}(\bv)\rangle_{\HS}\\
    &=\left\| DZ^{1}(\bv)\right\|_{\HS}^{2}=\int_{0}^{T}(D_{t}Z^{1}(\bv))^{\top}(D_{t}Z^{1}(\bv))\diff t\\
    &=\int_{0}^{T}\sum_{i=1}^{N}\left\|\int_{t}^{T}\left(\bX_{i}^{0}(s)\right)^{\top}\bv^{\otimes2} \exp\left((s-t)\bfA_{0}\right)\diff s\right\|^{2}\diff t\\
    &\le \int_{0}^{T}(T-t)\sum_{i=1}^{N}\int_{t}^{T}\left\|\left(\bX_{i}^{0}(s)\right)^{\top}\bv^{\otimes2} \exp\left((s-t)\bfA_{0}\right)\right\|_{2}^{2}\diff s\diff t\\
    &= \int_{0}^{T}(T-t)\sum_{i=1}^{N}\int_{t}^{T}\left(\left(\bX_{i}^{0}(s)\right)^{\top}\bv\right)^{2}\left\|\bv^{\top} \exp\left((s-t)\bfA_{0}\right)\right\|_{2}^{2}\diff s\diff t\\
    &\le \sup_{t\in[0,T]}\left(\sum_{i=1}^{N}\left(\left(\bX_{i}^{0}(s)\right)^{\top}\bv\right)^{2}\right)\int_{0}^{T}(T-t)\int_{t}^{T}\left\|\bv^{\top}\exp\left((s-t)\bfA_{0}\right)\right\|_{2}^{2}\diff s\diff t\\
    &\le \mathfrak{p}_{0}^{2}e^{2T\mathfrak{a}_{0}}\left\|\sum_{i=1}^{N}\bxi_{i}\bxi_{i}^{\top}\right\|_{\op}\mathfrak{p}_{0}^{2}T^{3}\exp(2\mathfrak{a}_{0}),
\end{align*}
where we used the Cauchy--Schwartz inequality for the first inequality and the following estimate for the last inequality:
\begin{equation*}
    \sum_{i=1}^{N}\left(\left(\bX_{i}^{0}(s)\right)^{\top}\bv\right)^{2}=\bv^{\top}\exp(t\bfA_{0})\left(\sum_{i=1}^{N}\bxi_{i}\bxi_{i}^{\top}\right)\exp(t\bfA_{0})^{\top}\bv\le \left\|\sum_{i=1}^{N}\bxi_{i}\bxi_{i}^{\top}\right\|_{\op}\mathfrak{p}_{0}^{2}e^{2T\mathfrak{a}_{0}}.
\end{equation*}
Hence,  Theorem 4.1 of \citet{nourdin2009density} yields a concentration bound such that for some $c\ge1$, for all $u\ge0$ and $\{\bxi_{i}\}$,
\begin{equation*}
    \Pr\left( \left.\left|2\int_{0}^{T}\left(\bX_{1:N}^{0}(t)\right)^{\top}\bfK_{\bv}\left(\bX_{1:N}^{1}(t)\right)\diff t\right|\ge Nu/2\right|\{\bxi_{i}\}\right)\le 2\exp\left(-\frac{N^{2}u^{2}}{c\left\|\sum_{i=1}^{N}\bxi_{i}\bxi_{i}^{\top}\right\|_{\op}}\right).
\end{equation*}
Using the estimate 
\begin{align}
    \sup_{\bv\in\bbS^{d-1}}\left|\bv^{\top}\left(\tilde{\bfC}_{N}-\bfC_{\infty}\right)\bv\right|&=\sup_{\bv\in\bbS^{d-1}}\left|\frac{1}{N}\sum_{i=1}^{N}\int_{0}^{T}\bv^{\top}\exp(t\bfA_{0})\left(\bxi_{i}\bxi_{i}^{\top}-\bSigma\right)\exp(t\bfA_{0})^{\top}\bv\diff t\right|\notag\\
    &\le T\mathfrak{p}_{0}^{2}\exp(2\mathfrak{a}_{0})\sup_{\bv\in\bbS^{d-1}}\left|\frac{1}{N}\sum_{i=1}^{N}\bv^{\top}\left(\bxi_{i}\bxi_{i}^{\top}-\bSigma\right)\bv\right|\label{eq:covariance:initials},
\end{align}
we obtain a classical estimate for the sample covariance matrix \citep[e.g., see Section 4.7 of][]{vershynin2018high} 
for some $c\ge1$ dependent only on $K$ and $\|\bSigma\|_{\op}$,
\begin{equation*}
    \Pr(E_{2}):=\Pr\left(\left\|\frac{1}{N}\sum_{i=1}^{N}\bxi_{i}\bxi_{i}^{\top}\right\|_{\op}\le c(1+\sqrt{d/N}+d/N)\right)\ge 1-2e^{-N}. 
\end{equation*}
Hence,
\begin{align*}
    &\Pr\left(\left|2\int_{0}^{T}\left(\bX_{1:N}^{0}(t)\right)^{\top}\bfK_{\bv}\left(\bX_{1:N}^{1}(t)\right)\diff t\right|\ge Nu/2\right)\\
    &=\int_{E_{2}\cup E_{2}^{c}}\int_{\calC([0,T];\R^{Nd})}\ind_{E_{1}}(\bw,\bxi)\Pr(\diff\bw)\Pr(\diff\bxi)\\
    &=\int_{E_{2}\cup E_{2}^{c}} \Pr\left( \left.\left|2\int_{0}^{T}\left(\bX_{1:N}^{0}(t)\right)^{\top}\bfK_{\bv}\left(\bX_{1:N}^{1}(t)\right)\diff t\right|\ge Nu/2\right|\{\bxi_{i}\}\right)\Pr(\diff \bxi_{1})\times \cdots\times \Pr(\diff \bxi_{N})\\
    &\le 2\exp\left(-\frac{Nu^{2}}{c(1+d/N)}\right)+2e^{-N}.
\end{align*}

(Step ii) We define 
\begin{equation*}
    Z^{2}(\bv):=\int_{0}^{T}\left\{\left(\bX_{1:N}^{1}(s)\right)^{\top}\bfK_{\bv}\left(\bX_{1:N}^{1}(s)\right)-\E\left[\left(\bX_{1:N}^{1}(s)\right)^{\top}\bfK_{\bv}\left(\bX_{1:N}^{1}(s)\right)\right]\right\}\diff s.
\end{equation*}
By the chain rule \citep[Proposition 3.3.2 of][]{nualart2018introduction}, we have
\begin{equation*}
    (D_{u}Z^{2}(\bv))^{\top}=2\int_{u}^{T}\left(\bX_{1:N}^{1}(s)\right)^{\top}\bfK_{\bv}\left(\bfI_{N}\otimes\exp\left((s-u)\bfA_{0}\right)\right)\diff s.
\end{equation*}
Since $Z^{2}(\bv)$ is a second-order Wiener chaos \citep{nualart2018introduction}, it holds that $Z^{2}(\bv)\in\bbD^{1,2}$, $L^{-1}Z^{2}(\bv)=-Z^{2}(\bv)/2$, and
\begin{align*}
    \langle DZ^{2}(\bv),-DL^{-1}Z^{2}(\bv)\rangle
    &=\frac{1}{2}\left\|DZ^{2}(\bv)\right\|_{\HS}^{2}\\
    &=2\int_{0}^{T}\left\|\int_{t}^{T}\left(\bX_{1:N}^{1}(s)\right)^{\top}\bfK_{\bv}\left(\bfI_{N}\otimes \exp\left((s-u)\bfA_{0}\right)\right)\diff s\right\|_{2}^{2}\diff t\\
    &\le 2\int_{0}^{T}(T-t)\int_{t}^{T}\left\|\left(\bX_{1:N}^{1}(s)\right)^{\top}\bfK_{\bv}\left(\bfI_{N}\otimes \exp\left((s-u)\bfA_{0}\right)\right)\right\|_{2}^{2}\diff s\diff t\\
    &\le 2\mathfrak{p}_{0}^{2}\int_{0}^{T}(T-t)\int_{t}^{T}\left(\bX_{1:N}^{1}(s)\right)^{\top}\bfK_{\bv}\bX_{1:N}^{1}(s)\exp\left(2(t-s)\mathfrak{a}_{0}\right)\diff s\diff t\\
    &\le 2\mathfrak{p}_{0}^{2}\int_{0}^{T}(T-t)\exp\left(2T\mathfrak{a}_{0}\right)\diff t\int_{0}^{T}\left(\bX_{1:N}^{1}(s)\right)^{\top}\bfK_{\bv}\bX_{1:N}^{1}(s)\diff s\\
    &\le \mathfrak{p}_{0}^{2}T^{2}\exp\left(2T\mathfrak{a}_{0}\right)\left(Z^{2}(\bv)+N\frac{\mathfrak{p}_{0}^{2}}{4\mathfrak{a}_{0}^{2}}\exp\left(2T\mathfrak{a}_{0}\right)\right),
\end{align*}
where the second inequality is given by Jensen's inequality, and the fourth inequality is due to the following bound:
\begin{align*}
    &\int_{0}^{T}\E\left[\left(\bX_{1:N}^{1}(s)\right)^{\top}\bfK_{\bv}\left(\bX_{1:N}^{1}(s)\right)\right]\diff s=\sum_{i=1}^{N}\int_{0}^{T}\E\left[\left(\left(\bX_{i}^{1}(s)\right)^{\top}\bv\right)^{2}\right]\diff s\\
    &=N\int_{0}^{T}\int_{0}^{s}\bv^{\top}\exp\left((s-u)\bfA_{0}\right)^{\otimes2}\bv\diff u\diff s
    \le N\mathfrak{p}_{0}^{2}\int_{0}^{T}\int_{0}^{s}\exp\left(2(s-u)\mathfrak{a}_{0}\right)\diff u\diff s\\
    &\le \frac{N\mathfrak{p}_{0}^{2}}{2\mathfrak{a}_{0}}\int_{0}^{T}\left(\exp\left(2s\mathfrak{a}_{0}\right)-1\right)\diff s\le \frac{N\mathfrak{p}_{0}^{2}}{4\mathfrak{a}_{0}^{2}}\exp\left(2T\mathfrak{a}_{0}\right).
\end{align*}
Hence, Theorem 4.1 of \citet{nourdin2009density} yields the second bound.
Thus, we obtain the conclusion.
\end{proof}

\begin{lemma}[uniform concentration]\label{lem:uniformbound}
    Under Assumptions \ref{assm:eigen} and \ref{assm:initial}, there exists a positive constant $c\ge1$ dependent only on $T$, $\mathfrak{a}_{0}$, $\mathfrak{p}_{0}$, $K$, and $\|\bSigma\|_{\op}$ such that for any $u\ge0$, we have
    \begin{equation*}
        \Pr\left(\sup_{\bv\in\bbS^{d-1}}\left|\bv^{\top}\left(\hat{\bfC}_{N}-\tilde{\bfC}_{N}\right)\bv\right|\ge u\right)\le 2\times9^{d}\left(e^{-N}+\exp\left(-\frac{Nu^{2}}{c(1+d/N)}\right) + \exp\left(-\frac{Nu^{2}}{c(2+u)}\right)\right).
    \end{equation*}
\end{lemma}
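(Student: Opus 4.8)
The plan is to upgrade the pointwise estimate of Proposition \ref{prop:rep} into the uniform bound by a standard $\epsilon$-net discretization of the sphere. The starting observation is that both $\hat{\bfC}_N$ and $\tilde{\bfC}_N$ are symmetric (each is an average of terms of the form $\bfM^{\otimes2}=\bfM\bfM^\top$), so their difference is symmetric and $\sup_{\bv\in\bbS^{d-1}}|\bv^\top(\hat{\bfC}_N-\tilde{\bfC}_N)\bv|=\|\hat{\bfC}_N-\tilde{\bfC}_N\|_{\op}$. This lets me pass from a supremum of a quadratic form over the whole sphere to a maximum over a finite net at the cost of a fixed multiplicative constant.

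First I would fix a $1/4$-net $\calN$ of $\bbS^{d-1}$. A volumetric covering estimate (e.g.\ Corollary 4.2.13 of \citet{vershynin2018high}) gives $|\calN|\le(1+2/(1/4))^d=9^d$, which is precisely the source of the $9^d$ prefactor in the statement. Next I would apply the operator-norm approximation lemma for symmetric matrices (Lemma 4.4.1 of \citet{vershynin2018high}): for any symmetric $\bfM$ and any $\epsilon$-net with $\epsilon<1/2$, $\|\bfM\|_{\op}\le(1-2\epsilon)^{-1}\max_{\bv\in\calN}|\bv^\top\bfM\bv|$. Taking $\epsilon=1/4$ makes the prefactor equal to $2$, so that
\[
\sup_{\bv\in\bbS^{d-1}}\bigl|\bv^\top(\hat{\bfC}_N-\tilde{\bfC}_N)\bv\bigr|\le 2\max_{\bv\in\calN}\bigl|\bv^\top(\hat{\bfC}_N-\tilde{\bfC}_N)\bv\bigr|.
\]

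Then a union bound over $\calN$ combined with the pointwise tail at threshold $u/2$ gives
\[
\Pr\Bigl(\sup_{\bv\in\bbS^{d-1}}\bigl|\bv^\top(\hat{\bfC}_N-\tilde{\bfC}_N)\bv\bigr|\ge u\Bigr)\le |\calN|\,\sup_{\bv\in\bbS^{d-1}}\Pr\bigl(\bigl|\bv^\top(\hat{\bfC}_N-\tilde{\bfC}_N)\bv\bigr|\ge u/2\bigr).
\]
Substituting the bound of Proposition \ref{prop:rep} with $u$ replaced by $u/2$ turns the exponent $Nu^2/(c(1+d/N))$ into $Nu^2/(4c(1+d/N))$ and the last exponent $Nu^2/(c(1+u))$ into $Nu^2/(2c(2+u))$; after absorbing the numerical factors $1/4$ and $1/2$ into a redefined constant $c$, these become exactly $\exp(-Nu^2/(c(1+d/N)))$ and $\exp(-Nu^2/(c(2+u)))$, while the leading factor $2$ is multiplied against $9^d=|\calN|$, yielding the claimed form.

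I do not expect a genuine obstacle here, since all the probabilistic work is already carried out in Proposition \ref{prop:rep}. The only points requiring care are the symmetry reduction---needed so that the sphere supremum of the quadratic form coincides with the operator norm to which the net lemma applies---and the constant bookkeeping in the final substitution, which must be arranged so that the third term retains the correct $2+u$ denominator (rather than, say, $1+u$) after rescaling.
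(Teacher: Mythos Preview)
Your proposal is correct and matches the paper's own proof essentially line for line: the paper also takes a $1/4$-net of cardinality at most $9^{d}$, invokes the factor-$2$ symmetric-matrix net approximation (the paper cites Lemma~S.8 of \citet{bartlett2020benign} rather than Lemma~4.4.1 of \citet{vershynin2018high}, but it is the same inequality), then applies a union bound and Proposition~\ref{prop:rep} at level $u/2$, obtaining exactly the exponents $Nu^{2}/(4c(1+d/N))$ and $Nu^{2}/(2c(2+u))$ before absorbing constants. Your bookkeeping for the third term is also identical to the paper's.
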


\begin{proof}
    We take $\calN$, a $(1/4)$-net of $\bbS^{d-1}$, and derive
\begin{equation*}
    \left\|\hat{\bfC}_{N}-\tilde{\bfC}_{N}\right\|_{\op}\le 2\sup_{\bv\in\calN}\left|\bv^{\top}\left(\hat{\bfC}_{N}-\tilde{\bfC}_{N}\right)\bv\right|
\end{equation*}
(see Lemma S.8 of \citealp{bartlett2020benign}).
As we can take $\calN$ satisfying $|\calN|\le 9^{d}$ \citep{vershynin2018high}, Proposition \ref{prop:rep} yields that
\begin{align*}
    \Pr\left(\sup_{\bv\in\bbS^{d-1}}\left|\bv^{\top}\left(\hat{\bfC}_{N}-\tilde{\bfC}_{N}\right)\bv\right|\ge u\right)&\le \Pr\left(\sup_{\bv\in\calN}\left|\bv^{\top}\left(\hat{\bfC}_{N}-\tilde{\bfC}_{N}\right)\bv\right|\ge u/2\right)\\
    &\le \sum_{\bv\in\calN}\Pr\left(\left|\bv^{\top}\left(\hat{\bfC}_{N}-\tilde{\bfC}_{N}\right)\bv\right|\ge u/2\right)\\
    &\le 2\times9^{d}\left(e^{-N} +\exp\left(-\frac{Nu^{2}}{4c(1+d/N)}\right) + \exp\left(-\frac{Nu^{2}}{2c(2+u)}\right)\right).
\end{align*}
    This is the desired conclusion.
\end{proof}

Let us define an event:
\begin{equation*}
    Q_{N}:=\left\{\sup_{\bv\in\bbS^{d-1}}\left|\bv^{\top}\left(\hat{\bfC}_{N}-\tilde{\bfC}_{N}\right)\bv\right|\le \frac{\kappa_{\min}}{4}\right\}\cap \left\{\sup_{\bv\in\bbS^{d-1}}\left|\bv^{\top}\left(\tilde{\bfC}_{N}-\bfC_{\infty}\right)\bv\right|\le \frac{\kappa_{\min}}{4}\right\}.
\end{equation*}
Before giving high-probability bounds on the eigenvalues of $\hat{\bfC}_{N}$, we remark on each event on the right hand side.
The first event on the right-hand side is a concentration with respect to the randomness of Wiener processes, and the second event is a concentration with respect to the randomness of initial values, which can be evaluated by a classical argument \citep[e.g.,][]{vershynin2018high}.

\begin{proposition}[high-probability bounds on eigenvalues]\label{prop:spectra}
Under Assumptions \ref{assm:eigen} and \ref{assm:initial}, there exists a positive constant $c\ge1$ dependent only on $T$, $\mathfrak{a}_{0}$, $\mathfrak{p}_{0}$, $K$, and $\|\bSigma\|_{\op}$ such that
\begin{align*}
    &\Pr\left(\left\{\lambda_{\min}(\hat{\bfC}_{N})\ge \frac{\kappa_{\min}}{2}\right\}\cup\left\{\lambda_{\max}(\hat{\bfC}_{N})\le \frac{\kappa_{\min}}{2}+\kappa_{\max}\right\}\right)\\
    &\ge \Pr(Q_{N})\ge 1-8\times9^{d}\times\exp\left(-\frac{N}{c(1+d/N)}\right).
\end{align*}
\end{proposition}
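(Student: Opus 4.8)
The plan is to split the statement into a deterministic inclusion of events followed by a union bound, proving the two inequalities in turn. The first inequality, $\Pr(\cdots\cup\cdots)\ge\Pr(Q_N)$, will follow once I show the deterministic containment of $Q_N$ in the intersection $\{\lambda_{\min}(\hat{\bfC}_N)\ge\kappa_{\min}/2\}\cap\{\lambda_{\max}(\hat{\bfC}_N)\le\kappa_{\min}/2+\kappa_{\max}\}$, since an intersection is contained in the corresponding union. On $Q_N$ the triangle inequality applied to the two defining bounds gives
\[
\sup_{\bv\in\bbS^{d-1}}\left|\bv^{\top}\left(\hat{\bfC}_{N}-\bfC_{\infty}\right)\bv\right|\le\frac{\kappa_{\min}}{4}+\frac{\kappa_{\min}}{4}=\frac{\kappa_{\min}}{2}.
\]
Because $\lambda_{\min}(\bfC_{\infty})=\kappa_{\min}$ and $\lambda_{\max}(\bfC_{\infty})=\kappa_{\max}$, the variational (Rayleigh-quotient) characterization of the extreme eigenvalues then yields $\lambda_{\min}(\hat{\bfC}_N)\ge\kappa_{\min}-\kappa_{\min}/2=\kappa_{\min}/2$ and $\lambda_{\max}(\hat{\bfC}_N)\le\kappa_{\max}+\kappa_{\min}/2$, which is exactly the containment sought.

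For the probability bound I would write $Q_N^{c}=A^{c}\cup B^{c}$, where $A$ and $B$ are respectively the first and second events in the definition of $Q_N$, and apply the union bound $\Pr(Q_N^{c})\le\Pr(A^{c})+\Pr(B^{c})$. The term $\Pr(A^{c})$ is controlled directly by Lemma \ref{lem:uniformbound} evaluated at $u=\kappa_{\min}/4$, producing a prefactor $2\times9^{d}$ times a sum of three exponentials. For $\Pr(B^{c})$ I would invoke the estimate \eqref{eq:covariance:initials}, which bounds $\sup_{\bv}|\bv^{\top}(\tilde{\bfC}_N-\bfC_{\infty})\bv|$ by a model-dependent constant times the operator-norm deviation $\|\frac{1}{N}\sum_{i}\bxi_{i}\bxi_{i}^{\top}-\bSigma\|_{\op}$; under Assumption \ref{assm:initial} this deviation satisfies the classical subgaussian sample-covariance concentration, obtained from a $(1/4)$-net of $\bbS^{d-1}$ of cardinality $\le9^{d}$ together with Bernstein's inequality applied to the centred subexponential variables $(\bv^{\top}\bxi_{i})^{2}-\E[(\bv^{\top}\bxi_{i})^{2}]$. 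Since the threshold $\kappa_{\min}/4$ is a fixed constant, this yields a bound of the form $2\times9^{d}\exp(-cN)$.

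The main obstacle is cosmetic rather than conceptual: reconciling the several distinct exponential rates into the single clean form $\exp(-N/(c(1+d/N)))$ claimed in the statement. The delicate points are that (i) the third exponential in Lemma \ref{lem:uniformbound} has denominator $c(2+u)$, which at $u=\kappa_{\min}/4$ is $\calO(1)$ and therefore carries no $1+d/N$ factor, and (ii) the $e^{-N}$ terms and the entire $\Pr(B^{c})$ bound likewise carry only a bare rate $\exp(-cN)$. I would resolve all of these uniformly by observing that $1+d/N\ge1$, so any term of the form $\exp(-N\beta)$ with $\beta$ a positive model-dependent constant satisfies $\exp(-N\beta)\le\exp(-N/(c(1+d/N)))$ whenever $c\ge1/\beta$. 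Here one uses the lower bound $\kappa_{\min}\ge\frac{1}{2}\mathfrak{p}_{0}^{-2}T^{2}\exp(-2\mathfrak{a}_{0}T)$ recorded earlier, which guarantees that every such $\beta$ is controlled by the admissible parameters $T$, $\mathfrak{a}_{0}$, $\mathfrak{p}_{0}$, $K$, and $\|\bSigma\|_{\op}$. Collecting the $2\times9^{d}\times3$ terms arising from $\Pr(A^{c})$ together with the $2\times9^{d}$ terms from $\Pr(B^{c})$, and enlarging $c$ to the maximum of the finitely many individual constants, produces the stated $8\times9^{d}\exp(-N/(c(1+d/N)))$ and completes the argument.
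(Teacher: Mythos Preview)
Your proposal is correct and follows essentially the same route as the paper: both arguments first verify the deterministic containment $Q_N\subset\{\lambda_{\min}(\hat{\bfC}_N)\ge\kappa_{\min}/2\}\cap\{\lambda_{\max}(\hat{\bfC}_N)\le\kappa_{\max}+\kappa_{\min}/2\}$ via the triangle inequality and Rayleigh quotient, then bound $\Pr(Q_N^{c})$ by combining Lemma~\ref{lem:uniformbound} at $u=\kappa_{\min}/4$ with the subgaussian sample-covariance concentration for the initial values through \eqref{eq:covariance:initials}, and finally absorb the four resulting exponentials into a single one using the lower bound $\kappa_{\min}\ge\tfrac{1}{2}\mathfrak{p}_{0}^{-2}T^{2}e^{-2\mathfrak{a}_{0}T}$ and the trivial inequality $1+d/N\ge1$. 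Your bookkeeping of the $8\times 9^{d}$ prefactor matches the paper's.
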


\begin{proof}
Let us suppose that $Q_{N}$ occurs.
Note that for some $c\ge 1$ dependent only on $T$, $\mathfrak{a}_{0}$, $\mathfrak{p}_{0}$, $K$, and $\|\bSigma\|_{\op}$, $\Pr(Q_{N})\ge 1-2\times 9^{d}(2e^{-N}+\exp(-(N\kappa_{\min}^{2})/(64c(1+d/N))) + \exp(-(N\kappa_{\min}^{2})/(c(64+16\kappa_{\min}))))$ by Lemma \ref{lem:uniformbound} and the following bound \citep[see][]{vershynin2018high}: for all $u\ge0$,
\begin{equation*}
    \Pr\left(\sup_{\bv\in\bbS^{d-1}}\left|\bv^{\top}\left(\tilde{\bfC}_{N}-\bfC_{\infty}\right)\bv\right|\ge u\right)\le \Pr\left(\sup_{\bv\in\bbS^{d-1}}\left|\bv^{\top}\left(\frac{1}{N}\sum_{i=1}^{N}\bxi_{i}\bxi_{i}^{\top}-\bSigma\right)\bv\right|\ge \frac{u}{c}\right)\le 2\times 9^{d}e^{-\frac{N}{c}\min\{u,u^{2}\}},
\end{equation*}
where we used the estimate \eqref{eq:covariance:initials}.
Therefore, we have
\begin{align*}
    \lambda_{\min}(\hat{\bfC}_{N})\ge \lambda_{\min}(\bfC_{\infty})-\sup_{\bv\in\bbS^{d-1}}\left|\bv^{\top}\left(\hat{\bfC}_{N}-\bfC_{\infty}\right)\bv\right|\ge \frac{\kappa_{\min}}{2},
\end{align*}
and 
\begin{align*}
    \lambda_{\max}(\hat{\bfC}_{N})\le \lambda_{\max}(\bfC_{\infty})+\sup_{\bv\in\bbS^{d-1}}\left|\bv^{\top}\left(\hat{\bfC}_{N}-\bfC_{\infty}\right)\bv\right|\le \kappa_{\max}+\frac{\kappa_{\min}}{2}.
\end{align*}
Therefore, the probability on the left-hand side of the statement is bounded below by $\Pr(Q_{N})$.
Using the estimate $\kappa_{\min}\ge (1/2)\mathfrak{p}_{0}^{-2}T^{2}\exp\left(-2\mathfrak{a}_{0}T\right)$, we obtain the desired conclusion.
\end{proof}

\subsection{Tail bounds for the martingale term}

Let us recall a matrix-valued martingale.
\begin{equation*}
     \bfM_{N}(t):=\frac{1}{N}\sum_{i=1}^{N}\int_{0}^{t}\diff \bw_{i}(s)(\bx_{i}(s))^{\top}.
\end{equation*}
Furthermore, we define the following Gaussian width and radius of a set $\calD\subset\R^{d\times d}$:
\begin{equation*}
    w(\calD)=\E\left[\sup_{\bfB\in\calD}\langle\textnormal{vec}(\bfB),\bz\rangle\right],\quad \rad(\calD)=\sup_{\bfB\in\calD}\|\bfB\|_{2},
\end{equation*}
where $\bz\sim\calN(\zero,\bfI_{d^{2}})$.

\begin{lemma}[Lemma 3.2 of \citealp{dexheimer2024lasso}]
    There exists a universal constant $c_{0}>0$ such that for any $\calD\subset\R^{d\times d}$ and $u>0$,
    \begin{equation*}
        \Pr\left(\sup_{\bfB\in\calD}\langle \bfM_{N}(T),\bfB\rangle \ind_{Q_{N}}\le c_{0}\sqrt{\frac{\kappa^{\ast}}{N}}(w(\calD)+u\rad(\calD))\right)\ge 1-2\exp(-u^{2}),
    \end{equation*}
    where $\kappa^{\ast}=\kappa_{\max}+\kappa_{\min}/2$.
\end{lemma}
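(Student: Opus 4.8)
The plan is to recognise the linear functional $\bfB\mapsto\langle\bfM_{N}(T),\bfB\rangle$ as the terminal value of a continuous martingale whose quadratic variation is governed by $\hat{\bfC}_{N}$, and then to run a generic-chaining argument over $\calD$ with the sub-Gaussian increment bound furnished by that martingale structure. First I would rewrite, using cyclicity of the trace and the definition $\langle\bfM_{1},\bfM_{2}\rangle=\tr(\bfM_{1}^{\top}\bfM_{2})$,
\begin{equation*}
\langle\bfM_{N}(T),\bfB\rangle=\frac{1}{N}\sum_{i=1}^{N}\int_{0}^{T}(\bfB\bx_{i}(s))^{\top}\diff\bw_{i}(s)=:M^{\bfB}_{T},
\end{equation*}
so that $M^{\bfB}=\{M^{\bfB}_{t}\}_{t\in[0,T]}$ is a continuous martingale null at $0$ with quadratic variation $\langle M^{\bfB}\rangle_{T}=\frac{1}{N}\tr(\bfB^{\top}\bfB\hat{\bfC}_{N})$, obtained from the It\^{o} isometry and the independence of the $\bw_{i}$. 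Since $\bfB\mapsto M^{\bfB}_{T}$ is linear, the increment between two indices is $M^{\bfB}_{T}-M^{\bfB'}_{T}=M^{\bfB-\bfB'}_{T}$, whose quadratic variation satisfies $\frac{1}{N}\tr((\bfB-\bfB')^{\top}(\bfB-\bfB')\hat{\bfC}_{N})\le\frac{1}{N}\lambda_{\max}(\hat{\bfC}_{N})\|\bfB-\bfB'\|_{2}^{2}$.

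The next step converts this pathwise variance control into a genuine, unconditional sub-Gaussian increment bound, and this is where the indicator $\ind_{Q_{N}}$ does its work. On $Q_{N}$ the event defining $Q_{N}$ forces $\lambda_{\max}(\hat{\bfC}_{N})\le\kappa_{\max}+\kappa_{\min}/2=\kappa^{\ast}$ (cf.\ the proof of Proposition~\ref{prop:spectra}), hence $\langle M^{\bfB-\bfB'}\rangle_{T}\le\sigma^{2}$ on $Q_{N}$ with $\sigma:=\sqrt{\kappa^{\ast}/N}\,\|\bfB-\bfB'\|_{2}$. Since $\exp(\lambda M^{\bfB-\bfB'}_{t}-\frac{\lambda^{2}}{2}\langle M^{\bfB-\bfB'}\rangle_{t})$ is a nonnegative supermartingale of expectation at most $1$ for every $\lambda\in\R$, I would deduce
\begin{equation*}
\E\left[\ind_{Q_{N}}e^{\lambda M^{\bfB-\bfB'}_{T}}\right]\le e^{\lambda^{2}\sigma^{2}/2}\,\E\left[e^{\lambda M^{\bfB-\bfB'}_{T}-\frac{\lambda^{2}}{2}\langle M^{\bfB-\bfB'}\rangle_{T}}\right]\le e^{\lambda^{2}\sigma^{2}/2}.
\end{equation*}
A Chernoff bound, together with the observation that $(M^{\bfB}_{T}-M^{\bfB'}_{T})\ind_{Q_{N}}$ can be nonzero only on $Q_{N}$, then yields the clean increment estimate $\Pr(|(M^{\bfB}_{T}-M^{\bfB'}_{T})\ind_{Q_{N}}|\ge t)\le 2e^{-t^{2}/(2\sigma^{2})}$ for all $t>0$. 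Thus the process $\bfB\mapsto M^{\bfB}_{T}\ind_{Q_{N}}$ has sub-Gaussian increments with respect to the metric $d(\bfB,\bfB')=\sqrt{\kappa^{\ast}/N}\,\|\bfB-\bfB'\|_{2}$, with no residual conditioning.

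Finally, with a genuinely sub-Gaussian process in hand I would invoke the high-probability form of the generic-chaining tail inequality (Talagrand; see also Dirksen): there is a universal $c_{0}$ with $\Pr(\sup_{\bfB\in\calD}M^{\bfB}_{T}\ind_{Q_{N}}\ge c_{0}(\gamma_{2}(\calD,d)+u\,\mathrm{diam}_{d}(\calD)))\le 2e^{-u^{2}}$, where for the non-centred supremum the reference-point value $M^{\bfB_{0}}_{T}\ind_{Q_{N}}$ (sub-Gaussian with parameter $\le\sqrt{\kappa^{\ast}/N}\,\rad(\calD)$) is absorbed into the radius term. It remains to identify the geometry: by homogeneity $\gamma_{2}(\calD,d)=\sqrt{\kappa^{\ast}/N}\,\gamma_{2}(\calD,\|\cdot\|_{2})$ and $\mathrm{diam}_{d}(\calD)\le 2\sqrt{\kappa^{\ast}/N}\,\rad(\calD)$, while the majorizing-measure theorem applied to the canonical Gaussian process $\bfB\mapsto\langle\vect(\bfB),\bz\rangle$, whose intrinsic metric is exactly $\|\cdot\|_{2}$, gives $\gamma_{2}(\calD,\|\cdot\|_{2})\le C\,\E[\sup_{\bfB\in\calD}\langle\vect(\bfB),\bz\rangle]=C\,w(\calD)$. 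Substituting and relabelling the universal constant produces the claimed bound $c_{0}\sqrt{\kappa^{\ast}/N}(w(\calD)+u\,\rad(\calD))$.

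The main obstacle is the second step: making the variance control interact correctly with the indicator. Naively conditioning on $Q_{N}$ fails, because $Q_{N}$ is a terminal event depending on the whole path through $\hat{\bfC}_{N}$ and is therefore not adapted, so one cannot condition and then apply a martingale estimate. The exponential-supermartingale device circumvents this by never conditioning on $Q_{N}$: it uses only that $\langle M^{\bfB-\bfB'}\rangle_{T}\le\sigma^{2}$ holds on the support of $\ind_{Q_{N}}$, which is exactly enough to extract an unconditional moment-generating-function bound. A secondary, purely technical point is pinning down the precise constant and form of the generic-chaining tail inequality and the majorizing-measure comparison, but these are standard and citable.
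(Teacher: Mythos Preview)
Your proposal is correct and follows essentially the same approach as the paper: both arguments control the quadratic variation of the continuous martingale $\langle\bfM_{N}(T),\bfB\rangle$ by $N^{-1}\kappa^{\ast}\|\bfB\|_{2}^{2}$ on $Q_{N}$, convert this into an unconditional sub-Gaussian increment bound for the process $\bfB\mapsto\langle\bfM_{N}(T),\bfB\rangle\ind_{Q_{N}}$, and then invoke the generic-chaining tail inequality for sub-Gaussian processes. The only cosmetic differences are that the paper derives the $\psi_{2}$-norm bound by integrating the Bernstein tail inequality for continuous martingales (rather than via the exponential supermartingale and Chernoff, which amounts to the same thing) and then cites Exercise~8.6.5 of \citet{vershynin2018high}, which packages the generic-chaining/majorizing-measure comparison to the Gaussian width directly.
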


\begin{proof}
    For any $\bfB\in\R^{d\times d}$,
    \begin{equation*}
        \frac{1}{N}\sum_{i=1}^{N}\int_{0}^{T}\|\bfB\bx_{i}(t)\|_{2}^{2}\diff t
        =\frac{1}{N}\sum_{i=1}^{N}\int_{0}^{T}\tr(\bfB\bx_{i}(t)\bx_{i}(t)^{\top}\bfB)\diff t
        =\tr(\bfB\hat{\bfC}_{N}\bfB^{\top})=\sum_{i=1}^{d}\bb_{i}^{\top}\hat{\bfC}_{N}\bb_{i}\le \|\hat{\bfC}_{N}\|_{\op}\|\bfB\|_{2}^{2},
    \end{equation*}
    where $\bb_{i}$ denotes the transpose of the $i$-th row vector of $\bfB$ (that is, the $i$-th column vector of $\bfB^{\top}$).
    Let $\bfB_{1},\bfB_{2}\in\calD$ with $\bfB_{1}\neq\bfB_{2}$. Then Bernstein's inequality for continuous martingales derives the following exponential moment bound:
    \begin{align*}
        &\E\left[\exp\left(\frac{\langle\bfM_{N}(T),\bfB_{1}-\bfB_{2}\rangle^{2}\ind_{Q_{N}}}{6N^{-1}\kappa^{\ast}\|\bfB_{1}-\bfB_{2}\|_{2}^{2}}\right)\right]\\
        &= \int_{0}^{\infty}\Pr\left(\exp\left(\frac{\langle\bfM_{N}(T),\bfB_{1}-\bfB_{2}\rangle^{2}\ind_{Q_{N}}}{6N^{-1}\kappa^{\ast}\|\bfB_{1}-\bfB_{2}\|_{2}^{2}}\right)>u\right)\diff u\\
        &\le 1+\int_{1}^{\infty}\Pr\left(|\langle\bfM_{N}(T),\bfB_{1}-\bfB_{2}\rangle|\ind_{Q_{N}}>\sqrt{6N^{-1}\kappa^{\ast}\log u}\|\bfB_{1}-\bfB_{2}\|_{2}\right)\diff u\\
        &\le 1+\int_{1}^{\infty}\Pr\left(\left\{\left|\frac{1}{N}\sum_{i=1}^{N}\int_{0}^{t} (\bx_{i}(s))^{\top}(\bfB_{1}-\bfB_{2})^{\top}\diff\bw_{i}(s)\right|>\sqrt{6N^{-1}\kappa^{\ast}\log u}\|\bfB_{1}-\bfB_{2}\|_{2}\right\}\cap Q_{N}\right)\diff u\\
        &\le 2.
    \end{align*}
    Therefore, $\{\langle \bfM_{N}(T),\bfB\rangle \ind_{Q_{T}}\rangle\colon \bfB\in\calD\}$ is a subgaussian process such that for any $\bfB_{1},\bfB\in\calD$,
    \begin{equation*}
        \left\|\langle \bfM_{N}(T),\bfB_{1}\rangle \ind_{Q_{T}}-\langle \bfM_{N}(T),\bfB_{2}\rangle \ind_{Q_{T}}\right\|\le \frac{6}{N}\kappa^{\ast}\|\bfB_{1}-\bfB_{2}\|_{2}^{2}.
    \end{equation*}
    Hence, Exercise 8.6.5 of \citet{vershynin2018high} yields the conclusion.
\end{proof}

\begin{proposition}[Proposition 3.3 of \citealp{dexheimer2024lasso}]\label{prop:martingale}
    For $0<\epsilon_{0}<1$, define a norm
    \begin{equation*}
        \|\bfB\|_{\Slope}:=\|\bfB\|_{\ast}\vee \sqrt{\log(4\epsilon_{0}^{-1})}\|\bfB\|_{2},~^{\forall}\bfB\in\R^{d\times d}.
    \end{equation*}
    There exists a universal constant $c_{\Lasso}$ such that for all $N>0$,
    \begin{equation*}
        \Pr\left(\sup_{\bfB\in\R^{d\times d},\bfB\neq\bfO}\frac{\langle\bfM_{N}(T),\bfB\rangle_{\Frobenius}}{\|\bfB\|_{\Slope}}\ind_{Q_{N}}\le c_{\Lasso}\sqrt{\frac{\kappa^{\ast}}{N}}\right)\ge 1-\frac{\epsilon_{0}}{2}.
    \end{equation*}
\end{proposition}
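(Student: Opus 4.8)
The plan is to exploit the positive homogeneity of both the numerator and the denominator in $\bfB$, and then to invoke the preceding lemma a single time with a carefully chosen deviation parameter $u$. Since $\bfB\mapsto\langle\bfM_{N}(T),\bfB\rangle$ is linear and $\bfB\mapsto\|\bfB\|_{\Slope}$ is a norm, the supremum over all $\bfB\neq\bfO$ equals the supremum over the unit ball $\calD:=\{\bfB\in\R^{d\times d}\colon\|\bfB\|_{\Slope}\le1\}$ (the factor $\ind_{Q_{N}}$ is constant in $\bfB$ and may be carried along). Thus it suffices to apply the previous lemma with this deterministic $\calD$ and to control the two geometric quantities $w(\calD)$ and $\rad(\calD)$ entering its bound.

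First I would bound the radius and fix $u$. By definition $\|\bfB\|_{\Slope}\ge\sqrt{\log(4\epsilon_{0}^{-1})}\,\|\bfB\|_{2}$, so every $\bfB\in\calD$ obeys $\|\bfB\|_{2}\le 1/\sqrt{\log(4\epsilon_{0}^{-1})}$, whence $\rad(\calD)\le 1/\sqrt{\log(4\epsilon_{0}^{-1})}$. Choosing $u=\sqrt{\log(4\epsilon_{0}^{-1})}$ then renders the deviation term harmless, $u\,\rad(\calD)\le1$, while the failure probability becomes $2\exp(-u^{2})=\epsilon_{0}/2$, which is exactly the target level. This is precisely the structural role of the $\ell^{2}$ summand in the definition of $\|\cdot\|_{\Slope}$: it caps the radius so that the stochastic fluctuation can be absorbed against the fixed confidence level.

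The substantive step is to show that the Gaussian width is $\calO(1)$, uniformly in $d$. Since $\|\bfB\|_{\Slope}\ge\|\bfB\|_{\ast}$, we have $\calD\subseteq\{\|\bfB\|_{\ast}\le1\}$, whence
\begin{equation*}
    w(\calD)\le\E\left[\sup_{\|\bfB\|_{\ast}\le1}\langle\vect(\bfB),\bz\rangle\right],\qquad\bz\sim\Gauss(\zero,\bfI_{d^{2}}),
\end{equation*}
the expected dual Slope norm of a standard Gaussian vector. The key fact --- and the main obstacle of the argument --- is that for the Slope weights $\lambda_{i}$ this expectation is bounded by a universal constant, independent of the dimension. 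I would establish this via the variational description of the dual norm as $\max_{k}\big(\sum_{i\le k}\bz_{i}^{\sharp}\big)/\big(\sum_{i\le k}\lambda_{i}\big)$, together with the concentration of the Gaussian order statistics $\bz_{i}^{\sharp}$ around the corresponding weights $\lambda_{i}$, so that each ratio, and hence their maximum, remains $\calO(1)$ in expectation. It is this dimension-free control of the Slope complexity, rather than any martingale estimate, that carries the weight of the proposition.

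Combining the three ingredients, the lemma's bound reads $c_{0}\sqrt{\kappa^{\ast}/N}\,(w(\calD)+u\,\rad(\calD))\le c_{0}(C+1)\sqrt{\kappa^{\ast}/N}$ on an event of probability at least $1-\epsilon_{0}/2$, where $C$ is the universal width bound from the previous step. Setting $c_{\Lasso}:=c_{0}(C+1)$ then delivers the claim for every $N>0$. Everything outside the Gaussian-width estimate is homogeneity and the balancing of $u$ against $\rad(\calD)$, so the hard part will be the quantitative, dimension-free bound on $w(\calD)$.
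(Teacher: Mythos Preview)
The paper does not supply its own proof of this statement---it is simply cited as Proposition~3.3 of \citet{dexheimer2024lasso}. Your plan is correct and is exactly the argument used in that reference (ultimately going back to \citealp{bellec2018slope}): reduce by homogeneity to the unit ball $\calD=\{\|\bfB\|_{\Slope}\le1\}$, apply the preceding lemma with $u=\sqrt{\log(4\epsilon_{0}^{-1})}$ so that $u\,\rad(\calD)\le1$ and $2e^{-u^{2}}=\epsilon_{0}/2$, and then bound $w(\calD)$ via the dimension-free expected dual Slope norm of a standard Gaussian.
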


\section{Proofs of main results}

\subsection{Proofs of the upper bounds}
We first give an i.i.d.~version of Lemma 3 of \citet{gaiffas2019sparse} and its extension to the Slope estimator.
Although the proof is essentially identical to the original version, we present it to see that it can be extended to i.i.d.~settings and the Slope estimator.

\begin{lemma}\label{lem:upper:GM19}
    For any matrix $\bfA\in\R^{d\times d}$ and any $\lambda>0$, we have
    \begin{align*}
        &\frac{1}{N}\sum_{i=1}^{N}\left\|\left(\hat{\bfA}_{\Lasso}-\bfA_{0}\right)\bx_{i}\right\|_{L^2}^{2}-\frac{1}{N}\sum_{i=1}^{N}\left\|\left(\bfA-\bfA_{0}\right)\bx_{i}\right\|_{L^2}^{2}\\
        &\le 2\left\langle\bfM_{N}(T),\bfA-\hat{\bfA}_{\Lasso}\right\rangle_{\Frobenius}-\frac{1}{N}\sum_{i=1}^{N}\left\|\left(\bfA-\hat{\bfA}_{\Lasso}\right)\bx_{i}\right\|_{L^2}^{2}+2\lambda\left(\left\|\bfA\right\|_{1}-\left\|\hat{\bfA}_{\Lasso}\right\|_{1}\right),
    \end{align*}
    and
    \begin{align*}
        &\frac{1}{N}\sum_{i=1}^{N}\left\|\left(\hat{\bfA}_{\Slope}-\bfA_{0}\right)\bx_{i}\right\|_{L^2}^{2}-\frac{1}{N}\sum_{i=1}^{N}\left\|\left(\bfA-\bfA_{0}\right)\bx_{i}\right\|_{L^2}^{2}\\
        &\le 2\left\langle\bfM_{N}(T),\bfA-\hat{\bfA}_{\Slope}\right\rangle_{\Frobenius}-\frac{1}{N}\sum_{i=1}^{N}\left\|\left(\bfA-\hat{\bfA}_{\Slope}\right)\bx_{i}\right\|_{L^2}^{2}+2\lambda\left(\left\|\bfA\right\|_{\ast}-\left\|\hat{\bfA}_{\Slope}\right\|_{\ast}\right).
    \end{align*}
\end{lemma}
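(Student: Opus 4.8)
The plan is to strip the claim of all probabilistic content, reducing it to an exact algebraic identity for the penalized objective together with the first-order optimality of its minimizer; the only genuinely stochastic object that survives is the martingale $\bfM_N(T)$.

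First I would recast $\Loss_N$ in a ``signal-plus-noise'' form. Inserting the dynamics $\diff\bx_i=\bfA_0\bx_i\diff t+\diff\bw_i$ into \eqref{eq:risk} and completing the square in $\bfA$ gives, for every $\bfA\in\R^{d\times d}$,
\[
\Loss_N(\bfA)=\frac{1}{2N}\sum_{i=1}^N\left\|(\bfA-\bfA_0)\bx_i\right\|_{L^2}^2-\left\langle\bfM_N(T),\bfA\right\rangle_{\Frobenius}+R_0,
\]
where $R_0:=-\tfrac{1}{2N}\sum_i\|\bfA_0\bx_i\|_{L^2}^2$ is independent of $\bfA$. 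The one nonroutine point is recognising the stochastic integral: expanding $\langle\bfM_N(T),\bfB\rangle_{\Frobenius}=\tr(\bfM_N(T)^\top\bfB)$ from the definition of $\bfM_N$ yields $\langle\bfM_N(T),\bfB\rangle_{\Frobenius}=\tfrac1N\sum_i\int_0^T(\bfB\bx_i(t))^\top\diff\bw_i(t)$, which is exactly the It\^o term produced by the substitution. Writing $\langle\bfB_1,\bfB_2\rangle_N:=\tr(\bfB_1\hat{\bfC}_N\bfB_2^\top)=\tfrac1N\sum_i\langle\bfB_1\bx_i,\bfB_2\bx_i\rangle_{L^2}$ for the empirical inner product, this exhibits $\Loss_N$ as an explicit convex quadratic plus a linear form.

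Next I would invoke optimality. Since $\|\cdot\|_1$ is a norm, the objective in \eqref{eq:lasso:definition} is convex, and at $\hat{\bfA}_{\Lasso}$ there is a subgradient $\bfG\in\partial\|\cdot\|_1(\hat{\bfA}_{\Lasso})$ with $(\hat{\bfA}_{\Lasso}-\bfA_0)\hat{\bfC}_N-\bfM_N(T)+\lambda\bfG=\zero$. Pairing this identity (in the Frobenius inner product) with $\hat{\bfA}_{\Lasso}-\bfA$ and using the subgradient inequality $\langle\bfG,\hat{\bfA}_{\Lasso}-\bfA\rangle_{\Frobenius}\ge\|\hat{\bfA}_{\Lasso}\|_1-\|\bfA\|_1$ yields
\[
\langle\hat{\bfA}_{\Lasso}-\bfA_0,\hat{\bfA}_{\Lasso}-\bfA\rangle_N-\langle\bfM_N(T),\hat{\bfA}_{\Lasso}-\bfA\rangle_{\Frobenius}\le\lambda\bigl(\|\bfA\|_1-\|\hat{\bfA}_{\Lasso}\|_1\bigr).
\]
Finally I would apply the polarization identity $\langle a,a-b\rangle_N=\tfrac12(\|a\|_N^2+\|a-b\|_N^2-\|b\|_N^2)$ with $a=\hat{\bfA}_{\Lasso}-\bfA_0$, $b=\bfA-\bfA_0$ to the first term; multiplying by $2$ and rearranging gives the asserted inequality, the term $-\tfrac1N\sum_i\|(\bfA-\hat{\bfA}_{\Lasso})\bx_i\|_{L^2}^2$ being precisely the strong-convexity gain carried by polarization. (Only $|\langle\bfM_N(T),\cdot\rangle_{\Frobenius}|$ is controlled downstream, so the sign attached to the martingale term is immaterial.)

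The Slope bound follows \emph{verbatim}: every step uses only that the penalty is a norm, so replacing $\|\cdot\|_1$ by $\|\cdot\|_\ast$ and $\hat{\bfA}_{\Lasso}$ by $\hat{\bfA}_{\Slope}$ (as in \eqref{eq:slope:definition}) leaves the subgradient argument and the polarization step intact. I expect no real obstacle here—this is a bookkeeping lemma; the only places demanding care are the It\^o identification of $\langle\bfM_N(T),\bfA\rangle_{\Frobenius}$ and the sign accounting in the optimality inequality, both routine once $\langle\cdot,\cdot\rangle_N$ is in place. One could instead start from the global inequality $\Loss_N(\hat{\bfA}_{\Lasso})+\lambda\|\hat{\bfA}_{\Lasso}\|_1\le\Loss_N(\bfA)+\lambda\|\bfA\|_1$, but then the exact quadratic expansion of $\Loss_N$ must still be used to recover the sharp $-\|\cdot\|_N^2$ term.
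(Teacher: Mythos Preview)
Your proposal is correct and follows essentially the same route as the paper: both derive the first-order optimality condition $(\hat{\bfA}-\bfA_0)\hat{\bfC}_N-\bfM_N(T)+\lambda\bfG=\bfO$ with $\bfG$ a subgradient of the penalty, pair it with $\hat{\bfA}-\bfA$, apply the subgradient inequality, and then expand the quadratic forms---you via the polarization identity for $\langle\cdot,\cdot\rangle_N$, the paper via a direct trace computation, which are the same step. Your remark that only $|\langle\bfM_N(T),\cdot\rangle_{\Frobenius}|$ matters downstream correctly disposes of the sign bookkeeping on the martingale term.
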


\begin{proof}
    We obtain that the gradient of $\Loss_{N}(\bfA)=\langle \bfA,\bfM_{N}(T)\rangle-(\bfA-\bfA_{0})\hat{\bfC}_{N}(\bfA-\bfA_{0})^{\top}/2+\bfA_{0}\hat{\bfC}_{N}\bfA_{0}/2$ is $\bfM_{N}(T)+(\bfA-\bfA_{0})\hat{\bfC}_{N}$.
    The optimality of $\hat{\bfA}_{\Lasso}$ (resp.~$\hat{\bfA}_{\Slope}$) derives that there exists $\bfG_{\Lasso}$ (resp.~$\bfG_{\Slope}$) in the subgradient of $\|\cdot\|_{1}$ (resp.~$\|\cdot\|_{\ast}$) at $\hat{\bfA}_{\Lasso}$ (resp.~$\hat{\bfA}_{\Slope}$) satisfying $\bfM_{N}(T)+(\hat{\bfA}_{\Lasso}-\bfA_{0})\hat{\bfC}_{N}+\lambda\bfG_{\Lasso}=\bfO$ (resp.~$\bfM_{N}(T)+(\hat{\bfA}_{\Slope}-\bfA_{0})\hat{\bfC}_{N}+\lambda\bfG_{\Slope}=\bfO$).
    By the definition of subgradients, for any $\bfA\in\R^{d\times d}$,
    \begin{equation*}
        \langle\bfG_{\Lasso},\bfA-\hat{\bfA}_{\Lasso}\rangle\le \|\bfA\|_{1}-\|\hat{\bfA}_{\Lasso}\|_{1},\ \langle\bfG_{\Slope},\bfA-\hat{\bfA}_{\Slope}\rangle\le \|\bfA\|_{\ast}-\|\hat{\bfA}_{\Slope}\|_{\ast}.
    \end{equation*}
    Since $(1/N)\sum_{i=1}^{N}\|\bfM\bx_{i}\|_{L^{2}}^{2}=\langle \bfM^{\top}\bfM,\hat{\bfC}_{N}\rangle_{\Frobenius}$ for any matrix $\bfM$, we derive
    \begin{align*}
        &\frac{1}{N}\sum_{i=1}^{N}\left\|\left(\hat{\bfA}_{\Lasso}-\bfA_{0}\right)\bx_{i}\right\|_{L^2}^{2}-\frac{1}{N}\sum_{i=1}^{N}\left\|\left(\bfA-\bfA_{0}\right)\bx_{i}\right\|_{L^2}^{2}+\frac{1}{N}\sum_{i=1}^{N}\left\|\left(\bfA-\hat{\bfA}_{\Lasso}\right)\bx_{i}\right\|_{L^2}^{2}\\
        &=\left\langle\hat{\bfC}_{N},\left(\hat{\bfA}_{\Lasso}-\bfA_{0}\right)^{\top}\left(\hat{\bfA}_{\Lasso}-\bfA_{0}\right)-\left(\bfA-\bfA_{0}\right)^{\top}\left(\bfA-\bfA_{0}\right)+\left(\bfA-\hat{\bfA}_{\Lasso}\right)^{\top}\left(\bfA-\hat{\bfA}_{\Lasso}\right)\right\rangle_{\Frobenius}\\
        &=\left\langle\hat{\bfC}_{N},
        \hat{\bfA}_{\Lasso}^{\top}\hat{\bfA}_{\Lasso}-2\bfA_{0}^{\top}\hat{\bfA}_{\Lasso}+\bfA_{0}^{\top}\bfA
        -\bfA^{\top}\bfA+2\bfA_{0}^{\top}\bfA-\bfA_{0}^{\top}\bfA_{0}
        +\bfA^{\top}\bfA-2\hat{\bfA}_{\Lasso}^{\top}\bfA+\hat{\bfA}_{\Lasso}^{\top}\hat{\bfA}_{\Lasso}\right\rangle_{\Frobenius}\\
        &=2\left\langle\hat{\bfC}_{N},
        \left(\bfA_{0}-\hat{\bfA}_{\Lasso}\right)^{\top}\left(\bfA-\hat{\bfA}_{\Lasso}\right)\right\rangle_{\Frobenius}= 2\left\langle\bfM_{N}(T)+\lambda\bfG_{\Lasso},\bfA-\hat{\bfA}_{\Lasso}\right\rangle_{\Frobenius}\\
        &\le  2\left\langle\bfM_{N}(T),\bfA-\hat{\bfA}_{\Lasso}\right\rangle_{\Frobenius}+2\lambda\left(\left\|\bfA\right\|_{1}-\left\|\hat{\bfA}_{\Lasso}\right\|_{1}\right),
    \end{align*}
    where we used the symmetricity of $\hat{C}_{N}$ to obtain the second identity.
    The same argument holds for the Slope estimator clearly.
\end{proof}

We first give the oracle inequality for the Lasso estimation.
\begin{proof}[Proof of Proposition \ref{prop:upper:lasso}]
    Note that for any $\bfB\in\R^{d\times d}$,
    \begin{equation*}
        \frac{1}{N}\sum_{i=1}^{N}\int_{0}^{T}\|\bfB\bx_{i}(t)\|_{2}^{2}\diff t
        =\frac{1}{N}\sum_{i=1}^{N}\int_{0}^{T}\tr(\bfB\bx_{i}(t)\bx_{i}(t)^{\top}\bfB)\diff t
        =\tr(\bfB\hat{\bfC}_{N}\bfB^{\top})=\sum_{i=1}^{d}\bb_{i}^{\top}\hat{\bfC}_{N}\bb_{i},
    \end{equation*}
    where $\bb_{i}$ denotes the transpose of the $i$-th row vector of $\bfB$ (or equivalently, the $i$-th column vector of $\bfB^{\top}$).
    Therefore, we obtain
    \begin{equation*}
        \inf_{\bfB\in\R^{d\times d}\backslash\{\bfO\}}\frac{\frac{1}{N}\sum_{i=1}^{N}\|\bfB\bx_{i}\|_{L^{2}}^{2}}{\|\bfB\|_{2}^{2}}=\lambda_{\min}(\hat{\bfC}_{N}).
    \end{equation*}
    We suppose that both of the following inequalities hold:
    \begin{align}
        &\inf_{\bfB\in\R^{d\times d}\backslash\{\bfO\}}\frac{\frac{1}{N}\sum_{i=1}^{N}\|\bfB\bx_{i}\|_{L^{2}}^{2}}{\|\bfB\|_{2}^{2}}\ge \frac{\kappa_{\min}}{2},\label{eq:lasso:upper:regular-covariance}\\
        &\sup_{\bfB\in\R^{d\times d}\backslash\{\bfO\}}\frac{\langle \bfM_{N}(T),\bfB\rangle_{\Frobenius}}{\|\bfB\|_{\Slope}}\le c_{\Lasso}\sqrt{\frac{\kappa^{\ast}}{N}}\label{eq:lasso:upper:bounded-martingale},
    \end{align}
    where $\|\cdot\|_{\Slope}:=\|\cdot\|_{\ast}\vee\sqrt{\log(4\epsilon_{0}^{-1})}\|\cdot\|_{2}$.
    By Propositions \ref{prop:spectra} and \ref{prop:martingale}, the probability of this event is bounded below by $1-\epsilon_{0}/2-8\times9^{d}\times\exp(-N/(c(1+d/N)))$.
    Lemma \ref{lem:upper:GM19} leads to
    \begin{align*}
        &\frac{1}{N}\sum_{i=1}^{N}\left\|\left(\hat{\bfA}_{\Lasso}-\bfA_{0}\right)\bx_{i}\right\|_{L^2}^{2}-\frac{1}{N}\sum_{i=1}^{N}\left\|\left(\bfA-\bfA_{0}\right)\bx_{i}\right\|_{L^2}^{2}\\
        &\le 2\left\langle\bfM_{N}(T),\bfB\right\rangle_{\Frobenius}-\frac{1}{N}\sum_{i=1}^{N}\left\|\bfB\bx_{i}\right\|_{L^2}^{2}+2\lambda\left(\left\|\bfA\right\|_{1}-\left\|\hat{\bfA}_{\Lasso}\right\|_{1}\right),
    \end{align*}
    where $\bfB:=\bfA-\hat{\bfA}_{\Lasso}$.
    Letting $\Delta_{\Lasso}:=\lambda_{\Lasso}\|\bfB\|_{1}+2\langle\bfM_{N}(T),\bfB\rangle_{\Frobenius}+2\lambda_{\Lasso}(\|\bfA\|_{1}-\|\hat{\bfA}_{\Lasso}\|_{1})$ and adding $\lambda_{\Lasso}\|\bfB\|_{1}$ to both the sides, we obtain
    \begin{equation}\label{eq:lasso:upper:objective}
        \frac{1}{N}\sum_{i=1}^{N}\left\|\left(\hat{\bfA}_{\Lasso}-\bfA_{0}\right)\bx_{i}\right\|_{L^2}^{2}+\lambda_{\Lasso}\|\bfB\|_{1}
        \le \frac{1}{N}\sum_{i=1}^{N}\left\|\left(\bfA-\bfA_{0}\right)\bx_{i}\right\|_{L^2}^{2}-\frac{1}{N}\sum_{i=1}^{N}\left\|\bfB\bx_{i}\right\|_{L^2}^{2}+\Delta_{\Lasso}.
    \end{equation}
    Regarding $\|\cdot\|_{\ast}$ appearing in the definition of $\|\cdot\|_{\Slope}$, we have
    \begin{align*}
        \|\bfB\|_{\ast}&\le \|\bfB\|_{2}\sqrt{\sum_{i=1}^{s}\log(2d^{2}/i)}+\sum_{i=s+1}^{d^{2}}(\vect(\bfB)^{(i)})^{\sharp}\sqrt{\log(2d^{2}/i)}\\
        &\le \sqrt{\log (2ed^{2}/s)}\left(\sqrt{s}\|\bfB\|_{2}+\sum_{i=s+1}^{d^{2}}(\vect(\bfB)^{(i)})^{\sharp}\right)=:F_{\Lasso}(\bfB),
    \end{align*}
    where the first inequality is given by the Cauchy--Schwarz inequality, and the second inequality is owing to Equation~(2.7) of \citet{bellec2018slope}.
    We obtain 
    \begin{align*}
        \Delta_{\Lasso}
        &=2\lambda_{\Lasso}\left(\frac{1}{2}\left\|\bfB\right\|_{1}+\left\|\bfA\right\|_{1}-\left\|\hat{\bfA}_{1}\right\|\right)+2\left\langle\bfM_{N}(T),\bfB\right\rangle_{\Frobenius}\\
        &\le 2\lambda_{\Lasso}\left(\frac{1}{2}\left\|\bfB\right\|_{1}+\left\|\bfA\right\|_{1}-\left\|\hat{\bfA}_{1}\right\|\right)+\frac{1}{\sqrt{\log(2ed^{2}/s)}}\lambda_{\Lasso}\left(F_{\Lasso}(\bfB)\vee\sqrt{ \log(4\epsilon_{0}^{-1})}\|\bfB\|_{2}\right)\\
        &\le \lambda_{\Lasso}\left(3\sqrt{s}\|\bfB\|_{2}-\sum_{i=s+1}^{d^{2}}(\vect(\bfB)^{(i)})^{\sharp}+\frac{1}{\sqrt{\log(2ed^{2}/s)}}\left(F_{\Lasso}(\bfB)\vee\sqrt{ \frac{2\log(4\epsilon_{0}^{-1})}{\kappa_{\min}}\left(\frac{1}{N}\sum_{i=1}^{N}\|\bfB\bx_{i}\|_{L^{2}}^{2}\right)}\right)\right),
    \end{align*}
    where the second line is owing to Eq.~\eqref{eq:lasso:upper:bounded-martingale} and $\lambda_{\Lasso}(\log(2ed^{2}/s))^{-1/2}\ge 2c_{\Lasso}\sqrt{N^{-1}\kappa^{\ast}}$, and 
    the last line is given by Lemma A.1 of \citet{bellec2018slope} (with $\tau=1/2$ under their notation and the condition $\|\bfA\|_{0}\le s$) and Eq.~\eqref{eq:lasso:upper:regular-covariance}.
    We first examine the case $F_{\Lasso}(\bfB)\le\sqrt{ (2\log(4\epsilon_{0}^{-1})/\kappa_{\min})(\frac{1}{N}\sum_{i=1}^{N}\|\bfB\bx_{i}\|_{L^{2}}^{2})}$.
    The non-negativity of $(\vect(\bfB)^{(i)})^{\sharp}$ and the bound on $F_{\Lasso}(\bfB)$ together imply
    \begin{equation*}
        \|\bfB\|_{2}\le \sqrt{\frac{2\log(4\epsilon_{0}^{-1})}{s\log(2ed^{2}/s)\kappa_{\min}}\left(\frac{1}{N}\sum_{i=1}^{N}\|\bfB\bx_{i}\|_{L^{2}}^{2}\right)}.
    \end{equation*}
    Therefore, 
    \begin{align*}
        \Delta_{\Lasso}&\le 3\lambda_{\Lasso}\sqrt{s}\|\bfB\|_{2}+\lambda_{\Lasso}\sqrt{\frac{2\log(4\epsilon_{0}^{-1})}{s\log(2ed^{2}/s)\kappa_{\min}}\left(\frac{1}{N}\sum_{i=1}^{N}\|\bfB\bx_{i}\|_{L^{2}}^{2}\right)}\\
        &\le 4\lambda_{\Lasso}\sqrt{\frac{2\log(4\epsilon_{0}^{-1})}{s\log(2ed^{2}/s)\kappa_{\min}}\left(\frac{1}{N}\sum_{i=1}^{N}\|\bfB\bx_{i}\|_{L^{2}}^{2}\right)}\\
        &\le 8\lambda_{\Lasso}^{2}\frac{\log(4\epsilon_{0}^{-1})}{s\log(2ed^{2}/s)\kappa_{\min}}+\frac{1}{N}\sum_{i=1}^{N}\|\bfB\bx_{i}\|_{L^{2}}^{2},
    \end{align*}
    where the first inequality uses the non-negativity of $(\vect(\bfB)^{(i)})^{\sharp}$, and the last inequality is given by the bound $ab\le a^{2}/4+b^{2}$ for any $a,b\ge 0$.
    For the other case $F_{\Lasso}(\bfB)>\sqrt{ (2\log(4\epsilon_{0}^{-1})/\kappa_{\min})(\frac{1}{N}\sum_{i=1}^{N}\|\bfB\bx_{i}\|_{L^{2}}^{2})}$, 
    \begin{equation*}
        \Delta_{\Lasso}\le 4\lambda_{\Lasso}\sqrt{s}\left\|\bfB\right\|_{2}\le 4\lambda_{\Lasso}\sqrt{\frac{2s}{\kappa_{\min}}\left(\frac{1}{N}\sum_{i=1}^{N}\|\bfB\bx_{i}\|_{L^{2}}^{2}\right)}\le 8\lambda_{\Lasso}^{2}\frac{s}{\kappa_{\min}}+\frac{1}{N}\sum_{i=1}^{N}\|\bfB\bx_{i}\|_{L^{2}}^{2}.
    \end{equation*}
    Therefore, these estimates on $\Delta_{\Lasso}$ and Eq.~\eqref{eq:lasso:upper:objective} derive the statement.
\end{proof}

\begin{proof}[Proof of Corollary \ref{cor:upper:lasso}]
    The bounds on the $L^{2}$- and $\ell^{1}$-distance are immediate.
    The bound on the $\ell^{2}$-distance also holds since we consider on the event where Eq.~\eqref{eq:lasso:upper:regular-covariance} holds.
\end{proof}

We next exhibit the oracle inequality for the Slope estimation.

\begin{proof}[Proof of Proposition \ref{prop:upper:slope}]
    We again suppose that the inequalities \eqref{eq:lasso:upper:regular-covariance} and \eqref{eq:lasso:upper:bounded-martingale} hold true.
    The probability of the event that both of them hold true is bounded below by $1-\epsilon_{0}/2-8\times9^{d}\times\exp(-N/(c(1+d/N)))$.

    We set $\bfB:=\bfA-\hat{\bfA}_{\Slope}$ and $\Delta_{\Slope}=2\langle \bfM_{N}(T),\bfB\rangle_{\Frobenius}+4c_{\Slope}\sqrt{N^{-1}}(\|\bfA\|_{\ast}-\|\hat{\bfA}\|_{\ast}+(1/2)\|\bfB\|_{\ast})$.
    Lemma \ref{lem:upper:GM19} yields that 
    \begin{equation}\label{eq:slope:upper:objective}
        \frac{1}{N}\sum_{i=1}^{N}\left\|\left(\hat{\bfA}_{\Slope}-\bfA_{0}\right)\bx_{i}\right\|_{L^2}^{2}+\frac{2c_{\Slope}}{\sqrt{N}}\|\bfB\|_{\ast}
        \le \frac{1}{N}\sum_{i=1}^{N}\left\|\left(\bfA-\bfA_{0}\right)\bx_{i}\right\|_{L^2}^{2}-\frac{1}{N}\sum_{i=1}^{N}\left\|\bfB\bx_{i}\right\|_{L^2}^{2}+\Delta_{\Slope}.
    \end{equation}
    We have
    \begin{align*}
        \Delta_{\Slope}
        &\le \frac{4c_{\Slope}}{\sqrt{N}}\left(\frac{1}{2}\|\bfB\|_{\ast}+\|\bfA\|_{\ast}-\|\hat{\bfA}\|_{\ast}\right)+2c_{\Lasso}\sqrt{\frac{\kappa^{\ast}}{N}}\|\bfB\|_{\Slope}\\
        &\le  \frac{4c_{\Slope}}{\sqrt{N}}\left(\frac{1}{2}\|\bfB\|_{\Slope}+\frac{1}{2}\|\bfB\|_{\ast}+\|\bfA\|_{\ast}-\|\hat{\bfA}\|_{\ast}\right)\\
        &\le \frac{4c_{\Slope}}{\sqrt{N}}\left(\frac{1}{2}\|\bfB\|_{\Slope}+\frac{3}{2}\sqrt{\sum_{i=1}^{s}\log\left(\frac{2d^{2}}{i}\right)}\|\bfB\|_{2}-\frac{1}{2}\sum_{i=s+1}^{d^{2}}(\vect(\bfB)^{(i)})^{\ast}\sqrt{\log\left(\frac{2d^{2}}{i}\right)}\right)\\
        &\le \frac{4c_{\Slope}}{\sqrt{N}}\left(\frac{1}{2}\|\bfB\|_{\Slope}+\frac{3}{2}\sqrt{s\log\left(\frac{2ed^{2}}{s}\right)}\|\bfB\|_{2}-\frac{1}{2}\sum_{i=s+1}^{d^{2}}(\vect(\bfB)^{(i)})^{\ast}\sqrt{\log\left(\frac{2d^{2}}{i}\right)}\right),
    \end{align*}
    where the first line uses \eqref{eq:lasso:upper:bounded-martingale}, the third line is derived by Lemma A.1 of \citet{bellec2018slope} (with $\tau=1/2$ under their notation), and the last line is given by Eq.~(2.7) of \citet{bellec2018slope}.
    We also have
    \begin{align*}
        \|\bfB\|_{\ast}&\le \|\bfB\|_{2}\sqrt{\sum_{i=1}^{s}\log(2d^{2}/i)}+\sum_{i=s+1}^{d^{2}}(\vect(\bfB)^{(i)})^{\sharp}\sqrt{\log(2d^{2}/i)}\\
        &\le \sqrt{s\log (2ed^{2}/s)}\|\bfB\|_{2}+\sum_{i=s+1}^{d^{2}}(\vect(\bfB)^{(i)})^{\sharp}\sqrt{\log(2d^{2}/i)}=:F_{\Slope}(\bfB),
    \end{align*}
    where the last inequality uses Eq.~(2.7) of \citet{bellec2018slope}.
    Therefore, as the proof of Proposition \ref{prop:upper:lasso}, we derive
    \begin{align*}
        \Delta_{\Slope}\le \frac{4c_{\Slope}}{\sqrt{N}}&\left(\frac{1}{2}\left(F_{\Slope}(\bfB)\vee \sqrt{\frac{2\log(4\epsilon_{0}^{-1})}{\kappa_{\min}}\left(\frac{1}{N}\sum_{i=1}^{N}\|\bfB\bx_{i}\|_{L^{2}}^{2}\right)}\right)\right.\\
        &\quad\left.+\frac{3}{2}\sqrt{s\log\left(\frac{2ed^{2}}{s}\right)}\|\bfB\|_{2}-\frac{1}{2}\sum_{i=s+1}^{d^{2}}(\vect(\bfB)^{(i)})^{\ast}\sqrt{\log\left(\frac{2d^{2}}{i}\right)}\right).
    \end{align*}
    We first consider the case $F_{\Slope}(\bfB)\le \sqrt{(2\log(4\epsilon_{0}^{-1})/\kappa_{\min})(\frac{1}{N}\sum_{i=1}^{N}\|\bfB\bx_{i}\|_{L^{2}}^{2})}$. It implies
    \begin{equation*}
        \|\bfB\|_{2}\le \sqrt{\frac{2\log(4\epsilon_{0}^{-1})}{s\log(2ed^{2}/s)\kappa_{\min}}\left(\frac{1}{N}\sum_{i=1}^{N}\|\bfB\bx_{i}\|_{L^{2}}^{2}\right)},
    \end{equation*}
    and thus
    \begin{align*}
        \Delta_{\Slope}&\le \frac{4c_{\Slope}}{\sqrt{N}}\left(\frac{1}{2}\sqrt{\frac{2\log(4\epsilon_{0}^{-1})}{\kappa_{\min}}\left(\frac{1}{N}\sum_{i=1}^{N}\|\bfB\bx_{i}\|_{L^{2}}^{2}\right)}+\frac{3}{2}\sqrt{s\log\left(\frac{2ed^{2}}{s}\right)}\|\bfB\|_{2}\right)\\
        &\le \frac{8c_{\Slope}}{\sqrt{N}}\sqrt{\frac{2\log(4\epsilon_{0}^{-1})}{\kappa_{\min}}\left(\frac{1}{N}\sum_{i=1}^{N}\|\bfB\bx_{i}\|_{L^{2}}^{2}\right)}
        \le \frac{32c_{\Slope}^{2}\log(4\epsilon_{0}^{-1})}{N\kappa_{\min}}+\frac{1}{N}\sum_{i=1}^{N}\|\bfB\bx_{i}\|_{L^{2}}^{2}.
    \end{align*}
    Under the case $F_{\Slope}(\bfB)> \sqrt{(2\log(4\epsilon_{0}^{-1})/\kappa_{\min})(\frac{1}{N}\sum_{i=1}^{N}\|\bfB\bx_{i}\|_{L^{2}}^{2})}$, since we suppose that \eqref{eq:lasso:upper:regular-covariance} holds true, we derive
    \begin{align*}
        \Delta_{\Slope}&\le \frac{4c_{\Slope}}{\sqrt{N}}\left(\frac{1}{2}F_{\Slope}(\bfB)+\frac{3}{2}\sqrt{s\log\left(\frac{2ed^{2}}{s}\right)}\|\bfB\|_{2}-\frac{1}{2}\sum_{i=s+1}^{d^{2}}(\vect(\bfB)^{(i)})^{\ast}\sqrt{\log\left(\frac{2d^{2}}{i}\right)}\right)\\
        &\le 8c_{\Slope}\sqrt{\frac{s\log(2ed^{2}/s)}{N}}\|\bfB\|_{2}\le 8c_{\Slope}\sqrt{\frac{2s\log(2ed^{2}/s)}{N\kappa_{\min}}\left(\frac{1}{N}\sum_{i=1}^{N}\|\bfB\bx_{i}\|_{L^{2}}^{2}\right)}\\
        &\le \frac{32c_{\Slope}^{2}s\log(2ed^{2}/s)}{N\kappa_{\min}}+\frac{1}{N}\sum_{i=1}^{N}\|\bfB\bx_{i}\|_{L^{2}}^{2}.
    \end{align*}
    Therefore, these estimates on $\Delta_{\Slope}$ combined with Eq.~\eqref{eq:slope:upper:objective} derive the statement.
\end{proof}

\begin{proof}[Proof of Corollarty \ref{cor:upper:slope}]
    Since $\|\cdot\|_{\ast}\ge \log 2\|\cdot\|_{1}$,
    the proof is immediate by adapting the proof of Corollary \ref{cor:upper:lasso}. 
\end{proof}

\subsection{Proof of the lower bounds}
We follow the line of \citet{dexheimer2024lasso}.
Note that our limiting information matrix is not the identity matrix up to multiplicative constants, but it suffices to show that the eigenvalues are bounded over $s,d,\epsilon$.
In this section, we let $D(\cdot\|\cdot)$ denote the Kullback--Leibler divergence.

\begin{lemma}[\citealp{gaiffas2019sparse}]\label{lem:gm19}
    Let $\bfA=-(\alpha\bfI_{d}+\bfB)$ for a positive number $\alpha>0$ and an antisymmetric matrix $\bfB$.
    Then, for all $T\in(0,\infty]$,
    \begin{align*}
        \int_{0}^{T}\exp\left(\bfA (T-t)\right)\exp\left(\bfA^{\top}(T-t)\right)\diff t=\frac{1-\exp(-2\alpha T)}{2\alpha}\bfI_{d}.
    \end{align*}
\end{lemma}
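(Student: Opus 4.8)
The plan is to reduce the two matrix exponentials to a scalar decay factor times an orthogonal rotation, after which the integrand collapses to a scalar multiple of the identity and the integral becomes an elementary one-dimensional computation. Writing $s=T-t$, I would first exploit the fact that the scalar matrix $\alpha\bfI_{d}$ commutes with $\bfB$, so that the factorization $\exp(\bfA s)=\exp(-\alpha s\bfI_{d})\exp(-\bfB s)=e^{-\alpha s}\exp(-\bfB s)$ holds. Since $\bfB$ is antisymmetric, $\bfA^{\top}=-\alpha\bfI_{d}+\bfB$, and the same commutativity gives $\exp(\bfA^{\top}s)=e^{-\alpha s}\exp(\bfB s)$.

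Next I would multiply these two expressions. Because $-\bfB s$ and $\bfB s$ are scalar multiples of the single matrix $\bfB$, they commute, so $\exp(-\bfB s)\exp(\bfB s)=\exp(\zero)=\bfI_{d}$; equivalently, $\exp(-\bfB s)$ is orthogonal with $\exp(-\bfB s)^{\top}=\exp(-\bfB^{\top}s)=\exp(\bfB s)$ as its inverse. Hence the integrand reduces to $\exp(\bfA s)\exp(\bfA^{\top}s)=e^{-2\alpha s}\bfI_{d}$. Changing variables back to $t$ via $s=T-t$ and integrating the scalar factor, $\int_{0}^{T}e^{-2\alpha(T-t)}\diff t=\int_{0}^{T}e^{-2\alpha s}\diff s=(1-e^{-2\alpha T})/(2\alpha)$, yields the claimed identity.

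The computation is essentially mechanical, so there is no genuine obstacle; the only points requiring care are the two uses of commutativity (relying on $\alpha\bfI_{d}$ being central and on $\pm\bfB s$ sharing the single generator $\bfB$) and the treatment of the endpoint $T=\infty$, where the hypothesis $\alpha>0$ forces $e^{-2\alpha T}\to0$ so that the improper integral converges to $(2\alpha)^{-1}\bfI_{d}$, consistent with evaluating the closed form at $T=\infty$.
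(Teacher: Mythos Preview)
Your proof is correct and reaches the same key identity as the paper, namely that the integrand collapses to $e^{-2\alpha s}\bfI_{d}$, after which the scalar integration is identical. The only methodological difference is in how that collapse is justified: the paper invokes the spectral theorem, writing $\bfB=i\bfU\bfD\bfU^{\ast}$ with $\bfU$ unitary and $\bfD$ real diagonal (since $i\bfB$ is Hermitian for real antisymmetric $\bfB$), and then observes $\exp(-i\bfD s)\bfU^{\ast}\bfU\exp(i\bfD s)=\bfI_{d}$. You instead argue directly from commutativity that $\exp(-\bfB s)\exp(\bfB s)=\bfI_{d}$, equivalently that $\exp(-\bfB s)$ is orthogonal. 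Your route is slightly more elementary in that it avoids complexification and diagonalization altogether; the paper's route, by contrast, makes the eigenstructure explicit, which can be convenient if one later needs finer spectral information. Both are entirely standard and equally valid here.
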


\begin{proof}
    The proof is almost identical to that of Lemma 6 of \citet{gaiffas2019sparse}. 
    As $\bfB^{\top}=\bfB$, $i\bfB$ is Hermitian and thus unitarily diagonalizable, that is, $\bfB=i\bfU\bfD\bfU^{\ast}$ for a unitary matrix $\bfU$ and a real diagonal matrix $\bfD$.
    Then, $\exp(-(\alpha\bfI+\bfB)t)=\exp(-\alpha t)\bfU\exp(-i\bfD t)\bfU^{\ast}$ and $\exp(-(\alpha\bfI+\bfB)^{\top}t)=\exp(-\alpha t)\bfU\exp(i\bfD t)\bfU^{\ast}$.
    We obtain that
    \begin{equation*}
        \int_{0}^{T}\exp\left(\bfA (T-t)\right)\exp\left(\bfA^{\top}(T-t)\right)\diff t=\int_{0}^{T}\exp\left(-2\alpha (T-t)\right)\diff t\bfI_{d}=\frac{1-\exp(-2\alpha T)}{2\alpha}\bfI_{d}.
    \end{equation*}
    This is the desired conclusion.
\end{proof}
Let $\bbP_{\bfA}^{N}$ denote the law of $\{\bx_{i}(t)\colon i\in\{1,\ldots,N\},t\in[0,1]\}$ whose true value of the drift parameter is $\bfA$.

\begin{proof}[Proof of Proposition \ref{prop:lower}]
We fix $r$, the largest even number with $r\le (s-d)/2$ and let $\Omega_{r}$ be the set al all antisymmetric matrices in $\{-1,0,1\}^{d\times d}$ whose number of nonzero elements is exactly $r$.
Due to Lemma F.1 of \citet{bellec2018slope} and Theorem 2.7 of \citet{dexheimer2024lasso}, there exists a set $\tilde{\Omega}_{r}\subset\Omega_{r}$, for all $\bfB,\bfB'\in\tilde{\Omega}_{r}$ with $\bfB\neq\bfB'$,
\begin{align}
    &\log\left(\left|\tilde{\Omega}_{r}\right|\right)\ge cr\log(ed(d-1)/r)\ge cr\log(ed^{2}/s),\label{eq:B:1}\\
    &\left\|\bfB-\bfB'\right\|_{p}^{p}\ge r/8\ge ((s-d)/2-1)/8\ge s/64,\label{eq:B:2}
\end{align}
where $c>0$ is a universal constant.
For arbitrary $w>0$, we set
\begin{align*}
    \Omega_{w}:=\left\{-\frac{1}{2}\bfI_{d}-w\bfB\colon \bfB\in\tilde{\Omega}_{r}\right\}.
\end{align*}
For any $\bfA,\bfA'\in\Omega_{w}$, Lemma \ref{lem:gm19} yields that
\begin{align*}
    D\left(\bbP_{\bfA}^{N}\|\bbP_{\bfA'}^{N}\right)&=\E_{\bfA}\left[\log\frac{\bbP_{\bfA}^{N}}{\bbP_{\bfA'}^{N}}(\bx)\right]=\E_{\bfA}\left[\frac{1}{2}\sum_{i=1}^{N}\int_{0}^{1}\left\|\left(\bfA-\bfA'\right)\bx_{i}(t)\right\|_{2}^{2}\diff t\right]\\
    &=\frac{1}{2}\sum_{i=1}^{N}\int_{0}^{1}\E\left[\left\|\left(\bfA-\bfA'\right)\int_{0}^{t}\exp((t-t')\bfA)\diff \bw_{i}(t')\right\|_{2}^{2}\right]\diff t\\
    &=\frac{1}{2}\sum_{i=1}^{N}\int_{0}^{1}\tr\left(\left(\bfA-\bfA'\right)\E\left[\left(\int_{0}^{t}\exp((t-t')\bfA)\diff \bw_{i}(t')\right)^{\otimes 2}\right]\left(\bfA-\bfA'\right)^{\top}\right)\diff t\\
    &=\frac{1}{2}\sum_{i=1}^{N}\int_{0}^{1}\tr\left(\left(\bfA-\bfA'\right)\int_{0}^{t}\left(\exp((t-t')\bfA)\exp((t-t')\bfA)^{\top}\diff t'\right)\left(\bfA-\bfA'\right)^{\top}\right)\diff t\\
    &=\frac{1}{2}N\left\|\bfA-\bfA'\right\|_{\Frobenius}^{2}\int_{0}^{1}\frac{1-e^{-2t}}{2}\diff t\\
    &\le \frac{1}{2}Nw^{2}r,
\end{align*}
and for $\bfA\neq\bfA'$ and $p\ge1$, Eq.~\eqref{eq:B:2} leads to
\begin{equation*}
    \left\|\bfA-\bfA'\right\|_{p}^{p}\ge sw^{p}/64.
\end{equation*}
By letting $w=\sqrt{cN^{-1}\log(ed^{2}/s)/25}$ and setting $\Omega=\Omega_{w}$, we have
\begin{align*}
    D\left(\bbP_{\bfA}^{N}\|\bbP_{\bfA'}^{N}\right)&< \log(|\Omega|)/8,\\
    \|\bfA-\bfA'\|_{p}^{p}&\ge s(cN^{-1}\log(ed^{2}/s))^{p/2}/320.
\end{align*}
Therefore, Theorem 2.7 of \citet{tsybakov2009introduction} completes the proof.
\end{proof}

\section*{Acknowledgement}
This work was supported by JSPS KAKENHI Grant Number JP25K21160, JST PRESTO Grant Number JPMJPR24K7, and JST CREST Grant Number JPMJCR2115.
The author used ChatGPT 4.0 to draft portions of the simulation code scaffolding and performed all verification and revisions.

\bibliographystyle{apalike}
\bibliography{bibliography}

\end{document}